\documentclass[12pt,reqno]{amsart}
\usepackage{amsmath, amssymb, amsfonts, xcolor}
\usepackage{hyperref}
\numberwithin{equation}{section}
\usepackage{amsthm}
\newtheorem{thm}{Theorem}[section]

\newtheorem{prop}[thm]{Proposition}
\newtheorem{lem}[thm]{Lemma}

\newtheorem{dfn}[thm]{Definition}

\newtheorem{remark}[thm]{Remark}

\usepackage[a4paper, top = 1.2in, bottom = 1.2in, left = 1.2in, right = 1.2in]{geometry}

\usepackage{enumitem}
\newlist{steps}{enumerate}{1}
\setlist[steps, 1]{label = Step \arabic*:}

\numberwithin{equation}{section}

\newcommand{\F}{\mathbb{F}}
\newcommand{\N}{\mathbb{N}}
\newcommand{\Q}{\mathbb{Q}}

\newcommand{\Z}{\mathbb{Z}}

\newcommand{\mcO}{\mathcal{O}}

\newcommand{\mfq}{\mathfrak{q}}
\newcommand{\mfP}{\mathfrak{P}}

\newcommand{\Gal}{\mathrm{Gal}}

\def\1{1\!\!1}

\title[GFE over cyclotomic $\mathbb{Z}_l$-extensions of $K$]{Generalized Fermat equation over cyclotomic $\mathbb{Z}_l$-extensions of totally real fields}

\author[S. Sahoo]{Satyabrat Sahoo}
\address[S. Sahoo]{Yau Mathematical Sciences Center, Tsinghua University, Beijing 100084, China}
\email{satyabrat.sahoo.94@gmail.com}

\keywords{Generalized Fermat equation, Unit equation, $\Z_l$-extensions, Totally real fields}
\subjclass[2020]{Primary 11R20, 11D41; Secondary  11R23, 11R27, 11R32}
\date{\today}

\begin{document}
	\begin{abstract}
		Let $K$ be a totally real number field of odd degree in which $2$ is inert. Let $l \geq 5$ be a prime with $l \nmid [K:\mathbb{Q}]$ and $\gcd(\frac{l-1}{2}, [K:\mathbb{Q}])=1$. We prove that if $l$ is non-Wieferich i.e., $2^{l-1} \not\equiv 1 \pmod {l^2} $ and $l$ is totally ramified in $K$, then the asymptotic Fermat’s Last Theorem holds over each $n$-th layer $K_{n,l}$ of the cyclotomic $\mathbb{Z}_l$-extension of $K$.  We then prove that the generalized Fermat equation $Ax^p+By^p+Cz^p=0$ has no asymptotic solution over each $n$-th layer $K_{n,l}$, when $A,B,C \in \{u2^r:  u\in \mathcal{O}_K^\times, r \in \mathbb{Z}_{\geq 0}\}$. 
		For any odd prime $d$, we also prove that if $A,B,C \in \{\pm 2^rd^s: r,s \in \mathbb{Z}_{\geq 0}\}$ and $h_{\mathbb{Q}_{n,l}}^+$ is odd, then the generalized Fermat equation $Ax^p+By^p+Cz^p=0$ has no effective asymptotic solution $(a,b,c) \in \mathcal{O}_{\mathbb{Q}_{n,l}}^3$ with $2|abc$. 
		The effectivity in the case of $\mathbb{Q}_{n,l}$ follows from a result of Thorne, which proves the modularity of elliptic curves over $\mathbb{Q}_{n,l}$.
	\end{abstract}
	
	\maketitle
	
	\section{Introduction}
	Let $K$ be a totally real number field and $\mcO_K$ denote the ring of integers of $K$. The generalized Fermat equation of exponent $p$ over $K$ is the equation 
	\begin{equation}
		\label{Ax^p+By^p+Cz^p=0}
		Ax^p+By^p+Cz^p=0, \  A,B,C \in \mcO_K\setminus \{0\}.
	\end{equation}
	The asymptotic Fermat’s Last Theorem over $K$ is the statement that there is a bound $V_K$ (depending only on $K$) such that for all primes $p >V_K$, the only solutions to the equation $x^p+y^p+z^p=0$ with $x,y,z \in K$ are the trivial ones, i.e., $xyz=0$. If $V_K$ is effectively computable, we shall refer to this as the effective asymptotic Fermat’s Last Theorem over $K$. 
	
	In \cite{FS15}, Freitas and Siksek first proved that the asymptotic Fermat’s Last Theorem holds over $K$ whenever $K$ satisfy certain $S$-unit criterion, and proved that the effective asymptotic Fermat’s Last Theorem holds over $K=\Q(\sqrt{d})$ for a subset of $d \geq 2$ with density $\frac{5}{6}$ among the set of square-free integers $d \geq 2$.
	In \cite{D16}, Deconinck extended the work of~\cite{FS15} to \eqref{Ax^p+By^p+Cz^p=0} with $ABC$ is odd.
	In \cite{KS23 Diophantine1}, Kumar and the author studied the asymptotic solution of $x^p+2^ry^p+z^p=0$ over $K$, for positive integers $r$. Finally in \cite{S26}, the author extended the result of \cite{KS23 Diophantine1} to \eqref{Ax^p+By^p+Cz^p=0} with $ABC$ is even
	and computed the density of all square free integers $d\geq 2$ such that the equation $Ax^p+By^p+Cz^p=0$ has no effective asymptotic solution over $K=\Q(\sqrt{d})$. 
	
	In \cite{FKS21}, Freitas, Kraus and Siksek proved that the effective asymptotic Fermat’s Last Theorem holds over each $n$-th layer 
	$\Q_{n,2}$ of the cyclotomic $\Z_2$-extension of $\Q$ (cf. \S\ref{preliminary section} for the $n$-th layer). 
	For any totally real number field $K$, in \cite{FKS20b}, the authors proved that if $2$ is totally ramified in $K$ and the narrow class number $h_K^+$ of $K$ is odd, then asymptotic Fermat’s Last Theorem holds over any totally real $2$-extension of $K$ unramified away from $2$.
	
	Let $l\geq 5$ be a prime. In \cite{FKS20a}, the authors proved that if $l$ is non-Wieferich, i.e., $2^{l-1} \not\equiv 1 \pmod {l^2} $, then the effective asymptotic Fermat’s Last Theorem holds over each $n$-th layer $\Q_{n,l}$ of the cyclotomic $\Z_l$-extension of $\Q$. 
	In this article, we study the asymptotic solutions of the generalized Fermat equation \eqref{Ax^p+By^p+Cz^p=0}  over the cyclotomic $\Z_l$-extensions of totally real number fields $K$.
	\subsection{Main results}
	\label{notations section for x^r+y^r=dz^p}
	We first prove that the asymptotic Fermat’s Last Theorem holds over each $n$-th layer 
	$K_{n,l}$ of the cyclotomic $\Z_l$-extension of $K$.
	\begin{thm}
		\label{main result2}
		Let $K$ be a totally real number field of odd degree in which $2$ is inert. Let $l \geq 5$ be a prime with $l \nmid [K:\Q]$ and $\gcd(\frac{l-1}{2}, [K:\Q])=1$. Assume 
		\begin{enumerate}
			\item $l$ is non-Wieferich, i.e., $2^{l-1} \not\equiv 1 \pmod {l^2} $;
			\item $l$ is totally ramified in $K$.
		\end{enumerate}
		Then the asymptotic Fermat’s Last Theorem holds over each $n$-th layer $K_{n,l}$ of the cyclotomic $\Z_l$-extension of $K$. Moreover, if all elliptic curves $E/K_{n,l}$ with full $2$-torsion are modular, then the effective	asymptotic Fermat’s Last Theorem holds over $K_{n,l}$.
	\end{thm}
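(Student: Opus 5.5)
Write $L=K_{n,l}$. The plan is to apply the Freitas--Siksek criterion \cite{FS15} in the form used in \cite{FKS20a}. That criterion says the asymptotic Fermat's Last Theorem holds over a totally real field $L$ provided that, with $S$ the set of primes of $L$ above $2$, every solution $(\lambda,\mu)$ of the $S$-unit equation $\lambda+\mu=1$ with $\lambda,\mu\in\mcO_S^\times$ satisfies the following: some $\mfP\in S$ has $\max\{|v_\mfP(\lambda)|,|v_\mfP(\mu)|\}\le 4v_\mfP(2)$. The criterion is effective once elliptic curves over $L$ with full $2$-torsion are modular. A simpler sufficient version applies when $2$ is inert (or $S=\{\mfP\}$) and $L$ has odd degree: it suffices to show that the $2$-unit equation has no solutions at all, or only solutions with $v_\mfP$ bounded by $4v_\mfP(2)$. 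So the first step is a splitting computation for $2$ in $L$.

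\textbf{Step 1: $2$ is inert in $L$.} The field $L=K\Q_{n,l}$ has degree $[K:\Q]\,l^n$, because $[K:\Q]$ is prime to $l$, so $K\cap\Q_{n,l}=\Q$. By the non-Wieferich hypothesis, $2$ has order divisible by $l$ modulo $l^2$. The Frobenius of $2$ in $\Gal(\Q(\zeta_{l^{n+1}})/\Q)\cong(\Z/l^{n+1})^\times$ then projects onto a generator of the $l$-part, so $2$ is inert in $\Q_{n,l}$. Since $2$ is inert in $K$, the residue degrees $[K:\Q]$ and $l^n$ are coprime and both divide $f(\mfP/2)$. Hence $2$ is inert in $L$, and $S=\{\mfP\}$ with $v_\mfP(2)=1$.

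\textbf{Step 2: solving the $S$-unit equation.} Let $\lambda+\mu=1$ with $\lambda,\mu$ $S$-units. Following \cite{FKS20a}, I work at the prime above $l$. The prime $l$ is totally ramified in $K$ and totally ramified in $\Q_{n,l}$, with coprime indices, so it is totally ramified in $L$; let $\mfq$ be the unique prime above $l$. Then $\mcO_L/\mfq=\F_l$. Reduce mod $\mfq$: $\lambda,\mu$ are $\mfq$-units, so $\bar\lambda+\bar\mu=1$ in $\F_l$. Now use norms. Put $N=N_{L/\Q}$ and $d=[L:\Q]$. Then $N(\lambda)=\pm2^{a}$, and $N(\lambda)\equiv\bar\lambda^{d}\pmod l$ because the residue field is $\F_l$ and all conjugates are congruent mod $\mfq$ (Galois closure argument, or via the norm map on residue fields of degree $1$). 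The hypotheses $l\nmid d$ and $\gcd(\frac{l-1}{2},[K:\Q])=1$ are designed so that $x\mapsto x^{d}$ on $\F_l^\times$ has kernel contained in $\{\pm1\}$ (as $l^n$ acts invertibly). Thus $\bar\lambda$ is determined up to sign by a power of $2$, say $\bar\lambda^{d}=\pm 2^{a}$. The aim is to show that $\bar\lambda$ and $\bar\mu$ lie in $\pm\langle 2\rangle$, and to reduce to a congruence that lifts to $\Q$.

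The key descent, as in \cite{FKS20a}, is to replace $\lambda$ by its relative norm down to $K$ (or to $\Q$). The approach I favor is to consider $N_{L/K}$ of the equation via the Galois action of $\Gal(L/K)\cong\Z/l^n$. Each conjugate $\sigma\lambda$ is again a solution, and all conjugates agree mod $\mfq$. Using the unit structure — $\lambda$ is $2^{v}$ times a unit, and by Step 1 we have $v_\mfP(\sigma\lambda)=v_\mfP(\lambda)$ — I would show that if $v_\mfP(\lambda)>4$ then $\mu\equiv1\pmod{2^5}$. I then exploit the congruence $\lambda\equiv \sigma\lambda \pmod{\mfq}$ together with a higher congruence mod $\mfq^2$, coming from the non-Wieferich condition, to force $\lambda\in K$. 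Once $\lambda\in K$, $\lambda$ is an $S$-unit of $K$ with $2$ inert. Applying the same norm argument to the equation $N_{K/\Q}$ and reducing to $\Q$, where $\pm2^a\pm2^b=1$ has only the solutions $(2,-1),(-1,2),(1/2,1/2)$, bounds the valuations by $1\le4$.

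\textbf{Main obstacle.} I expect the main obstacle to be the descent in Step 2: showing that any solution is forced to have small $\mfP$-adic valuation, or to be defined over a smaller field. This is where the non-Wieferich condition is really used — to control units congruent to $1$ modulo $\mfq^2$ — as is the coprimality of $\frac{l-1}{2}$ with $[K:\Q]$. I would first try to imitate the argument of \cite{FKS20a} for $\Q_{n,l}$ verbatim, with $\mfq$ totally ramified and residue field $\F_l$. The only new input is that the $d$-th power map is injective modulo $\pm1$, which is exactly the stated gcd hypothesis. Effectivity then follows directly from the modularity clause of the criterion.
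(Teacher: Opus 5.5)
\textbf{Gap in Step 2.} Your Step 1 is correct and matches the paper. Step 2, however, does not close, and it is the whole content of the proof. Nothing you state lets a congruence modulo $\mfq^2$ force $\lambda\in K$. The non-Wieferich hypothesis says nothing about $\mfq$: its only role is to make $2$ inert in $\Q_{n,l}$. Even if $\lambda\in K$, applying $N_{K/\Q}$ does not turn $\lambda+\mu=1$ into $\pm2^a\pm2^b=1$ over $\Q$, because the norm is multiplicative, not additive. So as written you never obtain any bound on $v_\mfP(\lambda)$.

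\textbf{Your criterion is also misstated.} Since $2$ is inert, $f(\mfP/2)>1$, so $\mfP$ is not in the Freitas--Siksek set $T$. The criterion (Theorem~\ref{thm for S-unit crit even} in the paper) then needs both $\max\{|v_\mfP(\lambda)|,|v_\mfP(\mu)|\}\le 4$ and $v_\mfP(\lambda\mu)\equiv1\pmod 3$. A bound by $4$ alone is not enough; you need the exact valuations.

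\textbf{The missing idea.} There is a direct contradiction built from ingredients you already have.
\begin{enumerate}
\item Suppose $v_\mfP(\lambda)\ge2$. Then $\lambda\in4\mcO_L$, so $\mu\in\mcO_L^\times$ with $\mu\equiv1\pmod{4\mcO_L}$. Every conjugate satisfies the same congruence, so $N_{L/\Q}(\mu)\equiv1\pmod 4$, and hence $N_{L/\Q}(\mu)=1$.
\item On the other hand, your own mod-$\mfq$ norm argument gives $\bar\mu=\pm1$ for a unit $\mu$. If $\bar\mu=1$, then $\lambda\in\mfq$, which is impossible. So $\bar\mu=-1$, and $N_{L/\Q}(\mu)\equiv(-1)^{[L:\Q]}=-1\pmod l$ because $[L:\Q]=l^n[K:\Q]$ is odd. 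This contradicts step 1 since $l\ge5$.
\item The case $v_\mfP(\lambda)\le-2$ reduces to the previous one via $(1/\lambda,-\mu/\lambda)$.
\item The case $v_\mfP(\lambda)=v_\mfP(\mu)=0$ is excluded because $\pm1\pm1\ne1$ in $\F_l$.
\end{enumerate}
This gives $(v_\mfP(\lambda),v_\mfP(\mu))\in\{(1,0),(0,1),(-1,-1)\}$, which is Lemma~\ref{lem for solution of S-unit eqn}. Note that the oddness of $[K:\Q]$, together with the gcd condition, does the real work here, not the non-Wieferich condition. These valuations give $v_\mfP(\lambda\mu)\in\{1,-2\}$, both $\equiv1\pmod 3$, so the criterion applies with $A=B=C=1$.
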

	We now study the asymptotic solution of the generalized Fermat equation \eqref{Ax^p+By^p+Cz^p=0} 
	over the cyclotomic $\Z_l$ extension of $K$. 
	\begin{dfn}
		Let $K$ be a totally real field. A solution $(a, b, c)\in K^3$ to the equation\eqref{Ax^p+By^p+Cz^p=0} is said to be non-trivial if $abc\neq 0$. We say a solution  $(a, b, c)\in \mcO_K^3$ is primitive if $a\mcO_K+b\mcO_K+c\mcO_K=\mcO_K$.
	\end{dfn}  
	\begin{dfn}
		\label{dfn for asymptotic solution}
		Let $K$ be a totally real field and let $S$ be a subset of $\mcO_K^3$. We say the generalized Fermat equation \eqref{Ax^p+By^p+Cz^p=0}  has no asymptotic solution in $K^3$ (resp. $S$), if there exists a constant $V:=V_{K,A,B,C}$ (depending on $K,A,B,C$) such that for primes $p >V$, \eqref{Ax^p+By^p+Cz^p=0}  has no non-trivial solutions in $K^3$ (resp. non-trivial primitive solutions in $S$). Moreover, if the above constant $V$ is effectively computable, then we say \eqref{Ax^p+By^p+Cz^p=0} has no effective asymptotic solution in $K^3$ (resp. $S$).
	\end{dfn}
	Let $\mcO_{K}^\times$ denote the unit group of $\mcO_K$. We now prove that if $A,B,C \in \{u2^r:  u\in\mcO_K^\times, r \in \Z_{\geq 0}\}$, then the generalized Fermat equation \eqref{Ax^p+By^p+Cz^p=0}  has no asymptotic solutions in $K_{n,l}^3$, for all integers $n \geq 1$.
	\begin{thm}
		\label{main result3}
		Let $K$ be a totally real number field of odd degree in which $2$ is inert and write $\mfP=2\mcO_K$. Let $l \geq 5$ be a prime with $l \nmid [K:\Q]$ and $\gcd(\frac{l-1}{2}, [K:\Q])=1$. Let $A,B,C \in \{u2^r:  u\in\mcO_K^\times, r \in \Z_{\geq 0}\}$. Assume 
		\begin{enumerate}
			\item $l$ is non-Wieferich, i.e., $2^{l-1} \not\equiv 1 \pmod {l^2} $;
			\item $l$ is totally ramified in $K$.
		\end{enumerate}
		If $A\pm B \pm C \neq 0$, $\max\{v_\mfP(A), v_\mfP(BC)\} \leq 4$ and  $v_\mfP(ABC) \equiv 0$ or $2 \pmod 3$, then the generalized Fermat equation $Ax^p+By^p+Cz^p=0$ has no asymptotic solution in $K_{n,l}^3$, for all integers $n \geq 1$. Moreover, if all elliptic curves $E/K_{n,l}$ with full $2$-torsion are modular, then the generalized Fermat equation $Ax^p+By^p+Cz^p=0$ has no effective asymptotic solution in $K_{n,l}^3$.
	\end{thm}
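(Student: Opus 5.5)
The proof follows the Freitas–Siksek modular method, in the form of the author's earlier work \cite{S26} on $Ax^p+By^p+Cz^p=0$ with $ABC$ even. That work reduces the problem to a criterion on $S$-unit solutions. Write $L=K_{n,l}$. Since $2$ is inert in $K$, and $L/K$ is an $l$-extension ramified only above $l$ (with $l$ odd), I will first show that $2$ is inert in $L$. The class of $2$ in $\Gal(L/K)\cong\Z/l^n$ must generate this group. This holds because the Frobenius of $2$ in $\Q_{n,l}/\Q$ generates $\Gal(\Q_{n,l}/\Q)$ exactly when $l$ is non-Wieferich, and $\gcd(l,[K:\Q])=1$ gives $K\cap\Q_{n,l}=\Q$. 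Also, $[K:\Q]$ odd means the residue degree of $2$ is odd, so the restriction map to $\Gal(\Q_{n,l}/\Q)$ still sends Frobenius to a generator. So $\mfP$ stays inert in $L$; call it $\mfP$ again. Then $S_L=\{\mfP\}$, and $T_L=S_L$, $U_L=\emptyset$ in the usual notation.

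\textbf{Step 1.} Suppose $(a,b,c)$ is a non-trivial solution with $p$ large. Scale it to be primitive up to a bounded ideal. Attach the Frey curve $E:\ y^2=x(x-Aa^p)(x+Bb^p)$, which has full $2$-torsion. Modularity for $p>V$ comes either from the Freitas–Le Hung–Siksek finiteness theorem (this gives the ineffective part) or from the hypothesis in the theorem (this gives the effective part). Irreducibility of $\bar\rho_{E,p}$ and level lowering (Freitas–Siksek/Deconinck, as in \cite{S26}) then give the following. There is a Hilbert newform $\mathfrak f$ of parallel weight $2$ and level $\mathcal N_p$, drawn from a finite set independent of $p$, with $\bar\rho_{E,p}\sim\bar\rho_{\mathfrak f,\mathfrak w}$. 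The conditions $\max\{v_\mfP(A),v_\mfP(BC)\}\le4$ and $v_\mfP(ABC)\equiv0,2\pmod 3$ enter here. They ensure that $E$ has potentially multiplicative reduction at $\mfP$, i.e.\ $v_\mfP(j_E)<0$ with $p\nmid v_\mfP(j_E)$, as in the lemmas of \cite{S26}. The next step is the standard Eichler–Shimura argument, which holds for $p$ large once $\mathfrak f$ is known to have rational eigenvalues. It produces an elliptic curve $E'/L$ with full $2$-torsion, good reduction away from $\mfP$, and potentially multiplicative reduction at $\mfP$. The relation $A\pm B\pm C\neq0$ is used to exclude the degenerate solutions $a=\pm b=\pm c$ (units), which would otherwise give CM-type or non-multiplicative $E$.

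\textbf{Step 2.} Following \cite{FS15}, such an $E'$ yields a solution $(\lambda,\mu)$ of the $S_L$-unit equation $\lambda+\mu=1$ with $|v_\mfP(\lambda\mu^{-1})|$ or similar bounded appropriately ($\max\{|v_\mfP(\lambda)|,|v_\mfP(\mu)|\}\le4v_\mfP(2)$ together with $v_\mfP(\lambda\mu)\equiv v_\mfP(ABC)\pmod3$-type constraint from \cite{S26}). It then suffices to show that no such unit solution exists. Since $\mfP$ is the unique prime over $2$ and has residue degree $f$ odd in $L$ (the degree is $f_K\cdot l^n$, and $f_K=[K:\Q]$ is odd), I will use the trick of \cite{FKS20a}. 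Reduce modulo $\mfP$: if $v_\mfP(\lambda)=v_\mfP(\mu)=0$, then $\bar\lambda+\bar\mu=1$ in $\F_{2^f}$. Combine this with the fact that $\lambda,\mu$ are units of $L$, and use totally ramified $l$ to control units, via the norm to $\Q$ or the parity of $l^n[K:\Q]$. The aim is a contradiction with the valuation constraints. Precisely, one reduces to the case $v_\mfP(\lambda)>0$ (wlog $\lambda=2^t u$, $\mu$ unit). Then consider $\mu^{\pm}$ and its image under the norm $N_{L/K}$ and then $N_{K/\Q}$, modulo powers of $2$. The congruence $\mu\equiv1\pmod{2^t}$ combined with $t\le4$ and $t\not\equiv$ forbidden residues mod $3$ gives the contradiction.

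\textbf{Main obstacle.} The hardest step is the $S$-unit analysis over $L$. It will need the hypotheses $\gcd(\frac{l-1}{2},[K:\Q])=1$ and total ramification of $l$. My guess is that they serve to show that $L$ contains no nontrivial roots of unity other than $\pm1$, and that $\mu_3$-type solutions (which come from $\zeta_3$, or from $\Q(\sqrt{-3})$-related units) cannot occur. The $\pmod3$ condition is exactly what excludes the classical solutions like $(\lambda,\mu)=(2,-1),(-1,2),(1/2,1/2)$ and the $\omega$-orbit. I would first try to prove the following: every solution has $\lambda,\mu\in\{2,-1,1/2\}$. The argument would go by descent to $K$, using that $\Gal(L/K)$ is an $l$-group and acts on the finite solution set. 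Orbits of size prime to $l$ must then be fixed, and the fixed solutions lie in $K$, where the inert-$2$, odd-degree argument of \cite{S26} applies.
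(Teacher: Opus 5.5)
\textbf{There is a genuine gap: you never verify the $S$-unit hypothesis of the criterion over $L=K_{n,l}$, and that verification is the main content of the theorem.}

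Your overall reduction matches the paper's. You show $2$ is inert in $L$, note $S_L'=S_L=\{\mfP\}$ because $A,B,C$ are units times powers of $2$, and apply the $S$-unit criterion of Theorem~\ref{thm for S-unit crit even}. Your Step~1 is a sketch of that cited criterion. One small slip: $\mathrm{Frob}_\mfP$ restricts to a generator of $\Gal(\Q_{n,l}/\Q)$ because $l\nmid f(\mfP/2)=[K:\Q]$, not because $[K:\Q]$ is odd.

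What you must show is that every solution of $\lambda+\mu=1$ with $\lambda,\mu\in\mcO_{S_L}^\times$ satisfies $\max\{|v_\mfP(\lambda)|,|v_\mfP(\mu)|\}\le 4$ and $v_\mfP(\lambda\mu)\equiv 1\pmod 3$. Your Step~2 does not do this:
\begin{itemize}
\item Reducing modulo $\mfP$ gives nothing. The field $\F_{2^f}$ with $f=l^n[K:\Q]\ge 5$ has many $x$ with $x,1-x\neq 0$.
\item Your closing $2$-adic computation takes $t\le 4$ and the mod-$3$ condition as inputs, but these are exactly what must be proved for every solution.
\item The hypotheses on $l$ have nothing to do with roots of unity; $L$ is totally real, so that is automatic.
\item The fallback claim that all solutions lie in $\{2,-1,\tfrac12\}$ is far stronger than needed and unjustified.
\item The $\Gal(L/K)$-orbit argument only shows that the number of fixed solutions is congruent mod $l$ to the total number. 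Orbits of size $l^k$ with $k\ge 1$ are not excluded.
\item The mod-$3$ condition does not exclude $(2,-1),(-1,2),(\tfrac12,\tfrac12)$. Their valuation pairs $(1,0),(0,1),(-1,-1)$ are exactly the permitted ones.
\end{itemize}

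The missing idea is to work modulo the unique prime $\mfq$ of $L$ above $l$. Since $l$ is totally ramified in $L$, $\mcO_L/\mfq\cong\F_l$. If $\lambda\equiv a\pmod{\mfq}$, then $\mathrm{N}_{L/\Q}(\lambda)\equiv a^{[L:\Q]}\pmod l$. For a unit this gives $a^{l^n[K:\Q]}\equiv\pm 1$. Combined with $a^{(l-1)/2}\equiv\pm1$ and $\gcd(l^n[K:\Q],\frac{l-1}{2})=1$, you get $\lambda\equiv\pm1\pmod{\mfq}$. The argument then runs as follows:
\begin{itemize}
\item \emph{Valuations $(0,0)$.} The unit equation has no solutions over $L$, since $1\equiv\pm1\pm1\pmod l$ is impossible for $l\ge 5$.
\item \emph{$v_\mfP(\lambda)\ge 2$.} Then $\mu$ is a unit with $\mu\equiv 1\pmod{\mfP^2}$, so $\mathrm{N}_{L/\Q}(\mu)\equiv 1\pmod 4$ and hence $\mathrm{N}_{L/\Q}(\mu)=1$. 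If $\mu\equiv 1\pmod{\mfq}$ then $\mfq\mid\lambda$, which is impossible, so $\mu\equiv -1\pmod{\mfq}$. Since $[L:\Q]$ is odd, $\mathrm{N}_{L/\Q}(\mu)\equiv(-1)^{[L:\Q]}=-1\pmod l$, a contradiction.
\item \emph{$v_\mfP(\lambda)\le -2$.} Apply the previous case to $(1/\lambda,-\mu/\lambda)$.
\end{itemize}
Hence $(v_\mfP(\lambda),v_\mfP(\mu))\in\{(1,0),(0,1),(-1,-1)\}$. These satisfy the criterion (note $-2\equiv 1\pmod 3$), and the theorem follows.
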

	\begin{remark}
		Let $A,B,C$,$l$ and $K$ be as in Theorem~\ref{main result3}. Further, if $A,B,C \in \Z \setminus \{0\}$, then Theorem~\ref{main result3} holds over each $n$-th layer $K_{n,l}$ of the cyclotomic $\Z_l$-extension of $K$  without the assumption $A\pm B \pm C \neq 0$. This follows from \cite[Proposition 3.16]{S26}.
	\end{remark}
		For any number field $F$, let $h_F^+$ denote the narrow class number of $F$.
		Let $d$ be an odd prime. 
		We now study the solution of the generalized Fermat equation $Ax^p+By^p+Cz^p=0$ over  $K$ in the case $A,B,C \in \{\pm 2^rd^s: r,s \in \Z_{\geq 0}\}$.
		\begin{thm}
			\label{main result4 for K}
			Let $K$ be a totally real number field with $2 \nmid h_K^+$ in which $2$ is inert and write $\mfP=2\mcO_K$. Let $d \geq 3$ be a prime with $d \equiv 1 \pmod 4$. Assume
			\begin{enumerate}
				\item $d$ is inert in $K$;
				\item $d^{[K: \Q]} \not\equiv 1,9,17,25\pmod {32}$.
			\end{enumerate}
			If $A,B,C \in \{u 2^rd^s: u\in\mcO_K^\times, r,s \in \Z_{\geq 0}\}$, then the generalized Fermat equation $Ax^p+By^p+Cz^p=0$ has no asymptotic solution $(a,b,c) \in \mcO_K^3$ with $\mfP |abc$. Moreover, if all elliptic curves $E/K$ with full $2$-torsion are modular, then the equation $Ax^p+By^p+Cz^p=0$ has no effective asymptotic solution $(a,b,c) \in \mcO_K^3$ with $\mfP |abc$.
		\end{thm}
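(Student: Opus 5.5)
The plan is the standard modular method in the form used by Freitas--Siksek and extended in \cite{S26}: reduce the absence of asymptotic solutions to the non-existence of solutions of an $S$-unit equation, and then rule out those solutions using the hypotheses on $d$. Let $S=\{\mfP, \mfq\}$ with $\mfq=d\mcO_K$; this is a prime by hypothesis (1). Suppose $(a,b,c)\in\mcO_K^3$ is a non-trivial primitive solution with $\mfP\mid abc$ and $p$ large. Dividing out the common $p$-th power parts of $A,B,C$ and permuting, we may assume $v_\mfP(A),v_\mfP(B),v_\mfP(C)<p$, and similarly at $\mfq$.

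Attach the Frey curve $E:\ y^2=x(x-Aa^p)(x+Bb^p)$. Then $E$ has full $2$-torsion, so it is modular for $p$ large, and effectively so under the stated modularity hypothesis. Its conductor away from $S$ is squarefree, and since $\mfP\mid abc$ and $p$ is large, $E$ has potentially multiplicative reduction at $\mfP$; the $p$-adic valuation of $j_E$ at $\mfP$ is negative and not divisible by $p$. Irreducibility of $\bar\rho_{E,p}$ (Freitas--Siksek), level lowering (Fujiwara, Jarvis, Rajaei) and Eichler--Shimura then give an elliptic curve $E'/K$ with full $2$-torsion and good reduction away from $S$. Moreover $E'$ has potentially multiplicative reduction at $\mfP$, which forces $v_\mfP(j_{E'})<0$. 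The point of $\mfP\mid abc$ is exactly to get this multiplicativity at $\mfP$. The usual argument (\cite[Theorem 3]{FS15} and its version in \cite{S26}) then yields a solution $(\lambda,\mu)\in\mathcal{O}_S^\times\times\mathcal{O}_S^\times$ of $\lambda+\mu=1$ with
\[
\max\{|v_\mfP(\lambda)|,|v_\mfP(\mu)|\}\le 4v_\mfP(2)=4
\quad\text{and}\quad
v_\mfP(\lambda\mu)\equiv v_\mfP(2)\pmod 3 .
\]
It also yields some constraint at $\mfq$, though it is not clear yet that this is needed. The oddness of $h_K^+$ is used here to write $E'$ in Legendre form over $K$ up to twist, as in \cite{FS15}.

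The heart of the proof, and the main obstacle, is showing that this $S$-unit equation has no such solution. Since $2$ is inert, $v_\mfP(2)=1$. Using the symmetry $\lambda\mapsto 1/\lambda,1-\lambda$, reduce to two cases.

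In the first case $v_\mfP(\lambda)=0$ and $v_\mfP(\mu)=t\in\{1,\dots,4\}$ with $t\equiv1\pmod3$, so $t\in\{1,4\}$. Write $\lambda=u d^{k}$ with $u\in\mcO_K^\times$, and likewise for $\mu$. Taking norms to $\Q$ gives an identity in $\Z[1/2d]$: the norms are $\pm d^{k[K:\Q]}$ and $\pm 2^{t[K:\Q]}d^{m[K:\Q]}$. Rather than norms, I will work in the residue field $\mcO_K/\mfP\cong\F_{2^f}$ and its higher $2$-adic congruences. From $\lambda=1-\mu\equiv1\pmod{\mfP^t}$ together with the condition mod $3$, the aim is to force $d^{k}$ times a unit to be congruent to $1$ modulo $\mfP^{4}$ or $\mfP^{5}$. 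Taking $\mathrm{Norm}_{K/\Q}$ then gives $d^{k[K:\Q]}\equiv\pm1\pmod{16}$ or $\pmod{32}$. Hypothesis (2), that $d^{[K:\Q]}\not\equiv1,9,17,25\pmod{32}$, means $d^{[K:\Q]}\not\equiv 1\pmod 8$; combined with $d\equiv1\pmod4$, this should force $k$ even. Iterating the descent, or taking $\mfq$-adic valuations symmetrically, should then lead to a contradiction. The alternative case, where $\mfq$ appears in both $\lambda$ and $\mu$ with opposite signs, is handled by rewriting $\mu/\lambda$. The $d\equiv1\pmod 4$ assumption ensures the sign $\pm$ can be absorbed. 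I expect the fiddly part to be controlling the units modulo powers of $\mfP$; I would first try norms to $\Q$, since the unit norms are $\pm1$, and use $2$ inert to make $\mfP^k$ norms into clean powers of $2$.

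Finally, combining the two steps gives the asymptotic statement. The bound $V$ is effective when modularity of $E$ is known, because then only the irreducibility and modularity constants enter.
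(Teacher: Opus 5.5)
Your reduction to the $S$-unit equation has its inequality pointing the wrong way, and the second half of your plan inherits the error. When $\mfP\mid abc$, the level-lowered curve $E'$ has $v_\mfP(j_{E'})<0$. In Legendre form $j_{E'}=2^8(\lambda^2-\lambda+1)^3/(\lambda\mu)^2$ with $\mu=1-\lambda$. Since $v_\mfP(2)=1$, a direct check gives $v_\mfP(j_{E'})=8-2t$ when $t=\max\{|v_\mfP(\lambda)|,|v_\mfP(\mu)|\}>0$, and $v_\mfP(j_{E'})\ge 8$ when $t=0$. So the Frey curve produces a solution with $\max\{|v_\mfP(\lambda)|,|v_\mfP(\mu)|\}\ge 5$. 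The two conditions you wrote down ($\max\le 4$ and $v_\mfP(\lambda\mu)\equiv 1\pmod 3$) are the Freitas--Siksek \emph{hypothesis} imposed on all solutions. They are equivalent to $v_\mfP(j)\ge 0$ and $3\mid v_\mfP(j)$, which is exactly what $E'$ is forced to violate. Moreover, the claim you then try to establish, that no $S$-unit solution satisfies them, is false. The pairs $(\lambda,\mu)=(-1,2)$, $(2,-1)$, $(1/2,1/2)$ are solutions over every such $K$, and $(-1,2)$ sits in your first case with $t=1$. No norm or congruence argument can close that case.

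What actually has to be proved is that every solution has $\max\{|v_\mfP(\lambda)|,|v_\mfP(\mu)|\}\le 4$; the mod-$3$ condition plays no role when $\mfP\mid abc$. This needs a descent that your sketch does not contain. The paper first normalizes to $v_\mfP(\lambda),v_\mfP(\mu)\ge 0$. Using finiteness of the solution set, it takes a solution with $v_\mfP(\lambda)\ge 5$ maximal. Then $\mu\equiv 1\pmod{\mfP^5}$ and $\mu=\alpha d^s$ with $\alpha\in\mcO_K^\times$. Since $d\equiv 1\pmod 4$, one gets $\alpha\equiv 1\pmod 4$, so $K(\sqrt{\alpha})/K$ is unramified at every finite prime, and $2\nmid h_K^+$ forces $\alpha$ to be a square. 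This, not the Legendre model, is where the narrow class number is used. If $s$ were odd, then $d\equiv v^2\pmod{\mfP^5}$; taking norms with $2$ inert would make $d^{[K:\Q]}$ an odd square modulo $32$, contradicting hypothesis (2). Hence $\mu=\gamma^2$ and $\lambda=(1+\gamma)(1-\gamma)$, with, say, $v_\mfP(1+\gamma)=1$ and $v_\mfP(1-\gamma)=v_\mfP(\lambda)-1$. Then $\lambda''=-(1-\gamma)^2/(4\gamma)$, $\mu''=(1+\gamma)^2/(4\gamma)$ is a new solution with $v_\mfP(\lambda'')=2v_\mfP(\lambda)-4>v_\mfP(\lambda)$, contradicting maximality. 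Your phrases ``should force $k$ even'' and ``iterating the descent \dots should lead to a contradiction'' do not supply the unit-square step, the parity step, or this valuation-increasing map.
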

		\begin{remark}
			In~\cite{S26}, the author proved the generalized Fermat equation $Ax^p+By^p+Cz^p=0$ has no asymptotic solution $(a,b,c) \in \mcO_K^3$ with $\mfP |abc$ whenever $A,B,C \in \{u 2^r:  u\in\mcO_K^\times, r\in \Z_{\geq 0}\}$. However, in Theorem~\ref{main result4 for K}, we use class field theory to extend the results of~\cite{S26} to  the case $A,B,C \in \{u 2^rd^s: u\in\mcO_K^\times, r,s \in \Z_{\geq 0}\}$, where $d$ is an odd prime.
		\end{remark}
		Let $d$ be an odd prime. We conclude this section by stating the following result. More precisely, we prove that the generalized Fermat equation \eqref{Ax^p+By^p+Cz^p=0} has no effective asymptotic solutions over the $n$-th layer $\Q_{n,l}$ of the cyclotomic $\Z_l$ extension of $\Q$ whenever $A,B,C \in \{\pm 2^rd^s: r,s \in \Z_{\geq 0}\}$.
		\begin{thm}
			\label{main result4}
			Let $d\geq 3$ and $l \geq 5$ be distinct rational primes, and let $n \geq 1$ be a positive integer. Assume \begin{enumerate}
				\item $l$ is non-Wieferich, i.e., $2^{l-1} \not\equiv 1 \pmod {l^2} $;
				\item $d \equiv 1 \mod 4$ and $d^{l-1} \not\equiv 1 \pmod {l^2} $;
				\item $d\not\equiv 1,9,17,25 \pmod{32}$.
			\end{enumerate}
			If $A,B,C \in \{\pm 2^rd^s: r,s \in \Z_{\geq 0}\}$ and $2 \nmid h_{\Q_{n,l}}^+$, then the generalized Fermat equation $Ax^p+By^p+Cz^p=0$ has no effective asymptotic solution $(a,b,c) \in \mcO_{\Q_{n,l}}^3$ with $2|abc$, i.e.,  there exists an effective computable constant $V:=V_{K,A,B,C}$ (depending on $K,A,B,C$) such that for primes $p >V$, the equation $Ax^p+By^p+Cz^p=0$ has no non-trivial primitive solutions $(a,b,c) \in \mcO_{\Q_{n,l}}^3$ with $2 |abc$.
		\end{thm}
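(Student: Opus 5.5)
Our plan is to deduce the statement from Theorem~\ref{main result4 for K} applied with $K=\Q_{n,l}$, supplemented by Thorne's modularity theorem to obtain effectivity. We therefore need to check each hypothesis of Theorem~\ref{main result4 for K} for $\Q_{n,l}$. This field is totally real and cyclic of degree $l^n$ over $\Q$, and $2\nmid h_{\Q_{n,l}}^+$ is assumed. For $A,B,C\in\{\pm2^rd^s\}$ we have $\pm1\in\mcO_{\Q_{n,l}}^\times$, so the coefficient set lies inside $\{u2^rd^s\}$. Moreover, $2\mid abc$ means $\mfP\mid abc$ once we know $2$ is inert.

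\textbf{Inertness of $2$ and $d$.} Only $l$ ramifies in $\Q_{n,l}$. Its Galois group is cyclic of $l$-power order, so a prime $q\neq l$ is inert if and only if its Frobenius generates $\Gal(\Q_{n,l}/\Q)\cong \Z/l^n\Z$. Equivalently, $q$ must not split in the first layer $\Q_{1,l}$. Now $\Q_{n,l}$ is the subfield of $\Q(\zeta_{l^{n+1}})$ fixed by the torsion subgroup of $(\Z/l^{n+1})^\times\cong \mu_{l-1}\times(1+l\Z_l)/(1+l^{n+1}\Z_l)$. Hence the Frobenius of $q$ is the image of $q$ in $(1+l\Z_l)/(1+l^{n+1}\Z_l)$, namely $q^{l-1}$ up to an automorphism of this cyclic group. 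It generates exactly when $q^{l-1}\not\equiv1\pmod{l^2}$. Applying this with $q=2$ (the non-Wieferich hypothesis) and $q=d$ (hypothesis (2)), both primes are inert in $\Q_{n,l}$; note $d\neq l$ by assumption.

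\textbf{The congruence condition.} Here $[K:\Q]=l^n$ is odd. The group $(\Z/32)^\times$ has exponent $8$, so raising to the odd power $l^n$ is an automorphism that fixes the subgroup $\{1,9,17,25\}$, the elements congruent to $1\bmod 8$. Hence $d^{l^n}\equiv1\pmod 8$ if and only if $d\equiv1\pmod 8$. Hypothesis (3) rules this out, so $d^{l^n}\not\equiv1,9,17,25\pmod{32}$. Together with $d\equiv1\pmod4$, this verifies all hypotheses of Theorem~\ref{main result4 for K}. We conclude that there is no asymptotic solution $(a,b,c)$ with $2\mid abc$.

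\textbf{Effectivity.} Theorem~\ref{main result4 for K} gives an effective bound once every elliptic curve over $K$ with full $2$-torsion is modular. By Thorne's theorem, all elliptic curves over $\Q_{n,l}$ are modular, so the constant is effective. The main point requiring care is the inertness criterion. We must identify the Frobenius of $q$ in $\Gal(\Q_{n,l}/\Q)$ with the $l$-adic projection of $q$, and confirm that failing to generate is equivalent to $q^{l-1}\equiv1\pmod{l^2}$; we expect this to be standard.
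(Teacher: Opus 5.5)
Your proposal is correct and follows the paper's argument. The paper checks the hypotheses of Proposition~\ref{prop for solution of S-unit eqn even soln} directly for $\Q_{n,l}$, using Lemma~\ref{Norm red lemma} to turn $d\equiv v^2 \pmod{\mfP^5}$ into $d^{l^n}\equiv 1 \pmod 8$ and hence $d\equiv 1 \pmod 8$, then applies Theorem~\ref{thm for S-unit crit all ABC}; you reach the same point by invoking Theorem~\ref{main result4 for K} after your odd-power argument mod $32$, and you also prove the inertness criterion for $2$ and $d$ yourself where the paper quotes it from Freitas--Kraus--Siksek.
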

		\begin{remark}
			The effectivity in Theorem~\ref{main result4} follows from a result of Thorne, which asserts that for every prime $l\geq 2$ and every positive integer $n \geq 1$, all elliptic curves over $\Q_{n,l}$ are modular (cf. \cite{T19} for details).
		\end{remark}
		

		\section{The unit equation over cyclotomic $\Z_l$ extensions of $F$}
		\label{preliminary section}
		Let $l$ be a rational prime and let $n\geq 1$ be a positive integer. Let $\mu_n$ be a primitive $n$-th root of unity. Then the cyclotomic field $\Q(\mu_{l^{n+1}})$ is a cyclic extension of $\Q$ with degree $[\Q(\mu_{l^{n+1}}): \Q]=l^n(l-1)$. Let $\Q_{n,l}$ denote the unique subfield of $\Q(\mu_{l^{n+1}})$ of degree $l^n$ over $\Q$. Let 
		$\Q_{\infty,l}:= \bigcup_{i=1}^{\infty} \Q_{n,l} $. Then $\Q_{\infty,l}$ is the cyclotomic $\Z_l$ extension of $\Q$, and we call $\Q_{n,l}$ as the $n$-th layer of $\Q_{\infty,l}$. 
		Let $F$ be any number field. We write $F_{\infty,l}:=F \cdot \Q_{\infty,l} $ the compositum of $F$ and $\Q_{\infty,l} $. 
		Then $F_{\infty,l}$ is a cyclotomic $\Z_l$ extension of $F$, and we call $F_{n,l}:=F \cdot \Q_{n,l} $ the $n$-th layer of $F_{\infty,l}$.
		
		The following theorem is very useful in the proof of the main results. More precisely, we prove that the unit equation over $F_{n,l}$ has no solutions for all $n \geq 1$.
		\begin{thm}
			\label{main result1 for unit equation}
			Let $F$ be a number field of degree $m$, and let $l$ be a prime that is totally ramified in $F$.
			If either $l=2$ or $l\geq 5$ with $l \nmid m$ and $\gcd(\frac{l-1}{2}, {m})=1$, then the unit equation 
			$$ \lambda + \mu=1, \ \lambda, \mu \in \mcO_{F_{n,l}}^\times$$ has no solutions for all integers $n \geq 1$.
		\end{thm}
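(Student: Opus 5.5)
The plan is to reduce a putative solution modulo the unique prime of $F_{n,l}$ above $l$, where the residue field is $\F_l$, and to use the norm map to show that every unit reduces to $\pm 1$. The equation $\lambda+\mu=1$ then has no room for a solution.

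\textbf{Step 1: total ramification.} First show that $l$ is totally ramified in $F_{n,l}$ with residue field $\F_l$. Suppose $l\geq 5$ and $l\nmid m$. Since $[\Q_{n,l}:\Q]=l^n$ and $[F:\Q]=m$ are coprime, $F\cap\Q_{n,l}=\Q$, so $[F_{n,l}:\Q]=ml^n$. Both $F$ and $\Q_{n,l}$ are totally ramified at $l$, so the ramification index of $l$ in $F_{n,l}$ is divisible by $\mathrm{lcm}(m,l^n)=ml^n$. Hence there is a unique prime $\mathfrak{l}$ of $F_{n,l}$ above $l$, with $e=ml^n$ and $f=1$, so $\mcO_{F_{n,l}}/\mathfrak{l}\cong\F_l$.

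\textbf{Step 2: units reduce to $\pm1$.} Let $\varepsilon\in\mcO_{F_{n,l}}^\times$ and choose $a\in\Z$ with $\varepsilon\equiv a \pmod{\mathfrak{l}}$. Let $L$ be the Galois closure of $F_{n,l}/\Q$ and $\Lambda$ a prime of $L$ above $l$. For every embedding $\sigma$, the field $\sigma(F_{n,l})$ also has a unique prime above $l$, namely $\sigma(\mathfrak{l})$, and it must equal $\Lambda\cap\sigma(F_{n,l})$. Thus $\sigma(\varepsilon)-a\in\sigma(\mathfrak{l})\subseteq\Lambda$ for all $\sigma$. Multiplying over all embeddings gives
$$\pm1=N_{F_{n,l}/\Q}(\varepsilon)\equiv a^{ml^n}\equiv a^{m}\pmod{l},$$
using $a^l\equiv a$. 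Hence $a^{2m}\equiv1\pmod l$. The hypothesis gives $\gcd(2m,l-1)=2\gcd(m,\tfrac{l-1}{2})=2$, so $a^2\equiv1$, i.e.\ $a\equiv\pm1\pmod l$.

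\textbf{Step 3: conclusion for $l\geq5$.} Reducing $\lambda+\mu=1$ modulo $\mathfrak{l}$ gives $\pm1\pm1\equiv1\pmod l$. The left side lies in $\{2,0,-2\}$, and none of these is $\equiv1$ modulo a prime $l\geq5$, a contradiction.

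\textbf{The case $l=2$.} This is the step I expect to be the main obstacle. Here no coprimality $2\nmid m$ is assumed, so Step 1 does not apply directly: a compositum of two totally ramified extensions can acquire residue degree. It suffices to find one prime of $F_{n,2}$ above $2$ with residue field $\F_2$. Then all units reduce to $1$, and $1+1=0\neq1$ in $\F_2$ gives the contradiction.

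To get such a prime, I would work locally. Let $F_v$ be the completion of $F$ at the prime above $2$, which is totally ramified over $\Q_2$ with residue field $\F_2$. Then $F_{n,2}\otimes_F F_v$ is a product of completions of $F_v\Q_{2,n}$, where $\Q_{2,n}$ denotes the completion of $\Q_{n,2}$, a cyclic totally ramified $2$-extension of $\Q_2$. I would try to show that $F_v\Q_{2,n}/F_v$ is totally ramified, using that it is cyclic of $2$-power degree and generated over $F_v$ by the totally ramified $\Q_{2,n}$; for instance, via norm groups in local class field theory, or via explicit generators $\zeta_{2^{n+2}}+\zeta_{2^{n+2}}^{-1}$ and their Eisenstein minimal polynomials. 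If this fails in general, the fallback is the norm argument of Step 2 run relative to a suitable subextension, or a direct appeal to the argument of \cite{FKS20b} for $2$-extensions unramified outside $2$.
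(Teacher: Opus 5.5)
For $l\geq5$ your argument is correct and is essentially the paper's. You reduce modulo the unique prime above $l$, whose residue field is $\F_l$, and use the norm to get $a^{ml^n}\equiv\pm1\pmod l$, hence $a\equiv\pm1$. Your use of $a^{l^n}\equiv a$ and $\gcd(2m,l-1)=2$ is interchangeable with the paper's B\'ezout identity combined with Euler's criterion. Your Step~1 also supplies the coprimality argument for total ramification of $l$ in $F_{n,l}$; the paper simply asserts this from total ramification in $F$ and in $\Q_{n,l}$.

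The case $l=2$ is a genuine gap, and it cannot be filled, because when $m$ may be even the statement is false.

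\begin{itemize}
\item \textbf{Counterexample.} Take $F=\Q(\sqrt{10})$, where $2$ is totally ramified (discriminant $40$), and $\Q_{1,2}=\Q(\sqrt2)$, the first layer of the cyclotomic $\Z_2$-extension. Then $F_{1,2}=\Q(\sqrt2,\sqrt{10})$ contains $\sqrt5$. With $\phi=\frac{1+\sqrt5}{2}$ one has $\phi^2+(-\phi)=1$, and both terms are units. Since $F_{n,2}\supseteq F_{1,2}$, the unit equation is solvable for every $n\geq1$.
\item \textbf{Other definition.} The paper's literal definition gives $\Q_{n,2}=\Q(\mu_{2^{n+1}})\ni i$. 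In that case take $F=\Q(\sqrt{-5})$ instead, so that $F(i)\ni\sqrt5$.
\item \textbf{Local picture.} $\Q_2(\sqrt{10})(\sqrt2)=\Q_2(\sqrt{10})(\sqrt5)$ is the unramified quadratic extension of $\Q_2(\sqrt{10})$. So the local total ramification you planned to prove fails. The only prime of $F_{1,2}$ above $2$ has residue field $\F_4$, and none of your fallbacks can rescue the claim.
\end{itemize}

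The paper's proof breaks at exactly this point: total ramification of $l$ in $F$ and in $\Q_{n,l}$ does not imply total ramification in the compositum unless the degrees are coprime.

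What is true is the case $l=2$ with $m$ odd. Then your Step~1 applies verbatim, since $\mathrm{lcm}(m,2^n)=m2^n$. Every unit reduces to $1$ in $\F_2$, and $1+1=0\neq1$ gives the contradiction. Only $l\geq5$ is used later in the paper, so nothing downstream is affected.
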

		\begin{proof}
			Suppose the unit equation $ \lambda + \mu=1$ has a solution $\ (\lambda, \mu) \in \mcO_{F_{n,l}}^\times \times \mcO_{F_{n,l}}^\times$ for some $n \in \N$.  Since $l$ is totally ramified in both $F$ and $\Q_{n,l}$, it follows that $l$ is totally ramified in $M=F_{n,l}$.
			Let $\mfq$ denote the unique prime of $M$ lying above $l$. This implies $l\mcO_M=\mfq^{[M: \Q]}$ and $\mcO_M/\mfq \simeq \F_l$. Hence, there exists $a \in \Z$ such that $\lambda \equiv a \pmod \mfq$. 
			Now, we show that $ a^{[M: \Q]} \equiv \pm 1 \pmod l$.
			
			Let $L$ denote the normal closure of $M/\Q$. Then $l\mcO_L=(\mfq\mcO_L)^{[M: \Q]}$. For every $\sigma \in \Gal(L/ \Q)$, we have $$(\sigma(\mfq\mcO_L))^{[M: \Q]}=\sigma(l\mcO_L)=l\mcO_L=(\mfq\mcO_L)^{[M: \Q]}.$$
			Using the unique factorization of ideals in the Dedekind domain $\mcO_L$, we conclude that $\sigma(\mfq\mcO_L)=\mfq\mcO_L$ for all $\sigma \in \Gal(L/ \Q)$.
			This gives $\sigma(\lambda) \equiv a \pmod {\mfq\mcO_L}$ for all $\sigma \in \Gal(L/ \Q)$. Let $\lambda_1, \dots, \lambda_{[M: \Q]}$ be the roots of the characteristic polynomial $C_{M,\lambda}(x)$ in $L$. Then $\lambda_i$'s are the conjugates of $\lambda$, i.e., $\lambda_i= \sigma (\lambda)$ for some  $\sigma \in \Gal(L/ \Q)$. This gives $\lambda_i \equiv a \pmod {\mfq\mcO_L}$, therefore we have
			$$C_{M,\lambda}(x)= (x-\lambda)\dots (x-\lambda_{[M: \Q]}) \equiv (x-a)^{[M: \Q]} \pmod {\mfq\mcO_L[x]}.$$
			Since $C_{M,\lambda}(x), \ (x-a)^{[M: \Q]} \in \Z[x]$, we get $C_{M,\lambda}(x) \equiv  (x-a)^{[M: \Q]}  \pmod {l\Z[x]}$. 
			This gives 
			$N_{M/\Q}(\lambda) \equiv a^{[M: \Q]} \pmod l$.
			Hence $a^{[M: \Q]} \equiv \pm 1 \pmod l$. 
			
			If $l=2$, then $ a^{[M: \Q]} \equiv 1 \pmod 2$ and hence  $a \equiv 1 \pmod 2$. This gives $ \lambda \equiv 1 \pmod \mfq$. Similarly $ \mu \equiv  1 \pmod \mfq$. Hence $$1= \lambda+ \mu \equiv 1+1\pmod \mfq.$$ This  is not possible since $\mfq |2$.
			
			We now assume $l \geq 5$. Then $ a^{\frac{l-1}{2}} \equiv \pm 1 \pmod l$. Recall that $[F:\Q]=m$ and $l \nmid m$. This implies $[M: \Q]=l^nm$. Using the assumption $\gcd(m, \frac{l-1}{2})=1$, we get $\gcd( [M: \Q], \frac{l-1}{2})= \gcd(l^nm, \frac{l-1}{2})=1$. Hence there exists integers $b,c$ such that $b[M: \Q]+ c\frac{l-1}{2}=1$. This gives
			$$a =a^{b[M: \Q]}.a^{c \frac{l-1}{2}} \equiv \pm 1 \pmod l.$$
			Hence 
			\begin{equation}
				\label{congruence of norm}
				\lambda \equiv \pm 1 \pmod \mfq.
			\end{equation}
			Similarly, we get $ \mu \equiv \pm 1 \pmod \mfq$. This gives $$1= \lambda+ \mu \equiv \pm1 \pm 1 \pmod \mfq.$$ This gives $1 \equiv \pm1 \pm 1 \pmod l$, which is not possible since $l \geq 5$.
		\end{proof}
		
		\section{Generalized Fermat equation over cyclotomic $\Z_l$ extensions of $K$}
		\label{preliminary for the GFE}
		In this section, we will prove the main results. Let $K$ be a totally real number field, $P_K$ denote the set of all non-zero prime ideals of $\mcO_K$ and $S_K:= \{ \mfP \in P_K :\ \mfP|2\}$. For any $A,B,C \in \mcO_K\setminus \{0\}$, let $S_K^{\prime}:= \{ \mfP \in P_K :\ \mfP|2ABC \}$.  For any set $S \subseteq P_K$, let $\mcO_{S}:=\{\alpha \in K : v_\mfP(\alpha)\geq 0 \text{ for all } \mfP \in P \setminus S\}$ denote the ring of $S$-integers of $K$ and $\mcO_{S}^\times$ denote the group of $S$-units of $K$. 
		\subsection{Proofs of Theorems~\ref{main result2} and~\ref{main result3}}
		The following criterion of the asymptotic generalized Fermat equation is a special case of \cite[Theorem 2.7]{S26}, and will play an important role in the proofs of Theorems~\ref{main result2} and~\ref{main result3}.
		\begin{thm}
			\label{thm for S-unit crit even}
			Let $K$ be a totally real number field of odd degree in which $2$ is inert and write $\mfP=2\mcO_K$. Let $A,B,C \in \mcO_K\setminus \{0\}$. Suppose every solution $(\lambda, \mu)$ to the $S_K^\prime$-unit equation
			\begin{equation}
				\label{S_K-unit solution}
				\lambda+\mu=1, \ \lambda, \mu \in \mcO_{S_K^\prime}^\times
			\end{equation}
			satisfies
			\begin{equation}
				\label{assumption for main result x^p+y^p=2^rz^p}
				\max \left\{|v_\mfP(\lambda)|,|v_\mfP(\mu)| \right\}\leq 4, \text{ and }  v_\mfP(\lambda\mu)\equiv 1\pmod 3.
			\end{equation}
				If $A\pm B \pm C \neq 0$, $\max\{v_\mfP(A), v_\mfP(BC)\} \leq 4$ and  $v_\mfP(ABC) \equiv 0$ or $2 \pmod 3$, then the generalized Fermat equation $Ax^p+By^p+Cz^p=0$ has no asymptotic solution in $K^3$. Moreover, if all elliptic curves $E/K$ with full $2$-torsion are modular, then the equation $Ax^p+By^p+Cz^p=0$ has no effective asymptotic solution in $K^3$.
		\end{thm}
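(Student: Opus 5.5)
The statement is presented as a special case of \cite[Theorem 2.7]{S26}, so the plan is to check that its hypotheses are instances of that more general criterion. I will also recall the modular method behind it, so that the specialization is visibly correct.

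\textbf{Step 1: Frey curve and level lowering.} Take a non-trivial solution $(a,b,c)\in K^3$ of $Ax^p+By^p+Cz^p=0$. Scale it to be integral, and primitive up to a bounded set of ideal classes; this uses the finiteness of the class group and a fixed set of class representatives, as in \cite{FS15, D16}. Attach the Frey curve
\[
E: Y^2=X(X-Aa^p)(X+Bb^p),
\]
which has full $2$-torsion. For $p$ large, modularity of $E$ holds. This is guaranteed outright by the asymptotic modularity results of Freitas--Le Hung--Siksek (ineffective), and effectively under the stated modularity assumption.

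Next, $E$ is semistable away from $S_K'$, and $\bar\rho_{E,p}$ is irreducible for $p$ large, by the bounds used in \cite{FS15}. Level lowering then gives a Hilbert eigenform $\mathfrak f$ of parallel weight $2$ and level $\mathcal N_p$ supported on $S_K'$, drawn from a finite set independent of $p$.

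\textbf{Step 2: from eigenforms to the unit equation.} Using the Eichler--Shimura type results for totally real fields and the image of inertia at $\mfP$, I will show that for $p$ large the form $\mathfrak f$ corresponds to an elliptic curve $E'/K$. This curve has full $2$-torsion (or a $2$-isogenous curve does) and good reduction outside $S_K'$. At $\mfP$ there is a dichotomy: either $E'$ has potentially multiplicative reduction, or $p\mid v_\mfP(j_E)$ forces a contradiction for $p$ large. Here I would use that $v_\mfP(ABC)\equiv 0,2 \pmod 3$ together with $\max\{v_\mfP(A),v_\mfP(BC)\}\le 4$. The hypotheses that $[K:\Q]$ is odd and $2$ is inert are what \cite{S26} uses to guarantee the existence of $E'$, via Hecke field arguments.

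A curve $E'$ with full $2$-torsion and good reduction outside $S_K'$ has a Legendre parameter $\lambda$ with $\lambda,\mu=1-\lambda$ a solution of \eqref{S_K-unit solution}. Its $j$-invariant is
\[
j=2^8\frac{(\lambda^2-\lambda+1)^3}{\lambda^2\mu^2}.
\]
The hypothesis \eqref{assumption for main result x^p+y^p=2^rz^p} then gives $v_\mfP(j)\ge 0$ or a value incompatible with potentially multiplicative reduction, which contradicts Step 2. Finally, $A\pm B\pm C\neq 0$ rules out the trivial solutions with $|a|=|b|=|c|=1$-type units, which would otherwise give a degenerate Frey curve.

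\textbf{Main obstacle.} The delicate step is the valuation bookkeeping at $\mfP$. One has to show that the constraints on $v_\mfP(A), v_\mfP(BC), v_\mfP(ABC)$ and on $v_\mfP(\lambda\mu)\bmod 3$ are exactly what make $v_\mfP(j_{E'})<0$ impossible, respectively force $v_\mfP(j_E)<0$ with $p\nmid v_\mfP(j_E)$. I would simply check that these conditions are the specialization of the corresponding hypotheses in \cite[Theorem 2.7]{S26} when $S_K=\{\mfP\}$, and cite that theorem.
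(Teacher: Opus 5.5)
Your plan matches the paper: the paper gives no independent proof of this criterion and simply records it as a special case of \cite[Theorem 2.7]{S26}, which is the citation your argument ultimately rests on. One caution about your heuristic sketch: the mod-$3$ hypotheses are not what forces $v_\mfP(j_E)<0$. Under the stated bounds, when $\mfP\nmid abc$ both the Frey curve $E$ and $E'$ have potentially good reduction at $\mfP$, and the contradiction then comes from the $3$-part of the inertia image. Namely, $3\mid\#\bar\rho_{E,p}(I_\mfP)$ because $v_\mfP(\Delta_E)=4+2v_\mfP(ABC)\not\equiv 0\pmod 3$, whereas $3\nmid\#\bar\rho_{E',p}(I_\mfP)$ because $v_\mfP(\Delta_{E'})=4+2v_\mfP(\lambda\mu)\equiv 0\pmod 3$. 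The bound $\le 4$ on $v_\mfP(\lambda),v_\mfP(\mu)$ is only what excludes potentially multiplicative reduction of $E'$.
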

		The following lemma is a key ingredient in the proof of Theorems~\ref{main result2} and~\ref{main result3}.
		\begin{lem}
			\label{lem for solution of S-unit eqn}
			Let $K$ be a totally real number field of odd degree. Let $l \geq 5$ be a prime that is totally ramified in $K$ with $l \nmid [K:\Q]$ and $\gcd(\frac{l-1}{2}, [K:\Q])=1$. Assume $2$ is inert in $F= K_{n,l}$. Write $\mfP=2\mcO_F$ and $S_F=\{\mfP\}$. Then for every solution $(\lambda, \mu)$ to the $S_F$-unit equation
			$\lambda+\mu=1, \ \lambda, \mu \in \mcO_{S_F}^\times$ satisfies
			$ \left(v_\mfP(\lambda), v_\mfP(\mu)\right) \in \{(1,0), (0,1), (-1,-1)\}$.
		\end{lem}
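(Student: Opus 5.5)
We argue by cases on the valuations $t=v_\mfP(\lambda)$ and $s=v_\mfP(\mu)$, and reduce everything to Theorem~\ref{main result1 for unit equation}. By hypothesis on $K$ and $l$, Theorem~\ref{main result1 for unit equation} (applied with $F=K$, $m=[K:\Q]$) shows that the unit equation has no solutions in $\mcO_{F}^\times$, where $F=K_{n,l}$. Since $\lambda+\mu=1$, the ultrametric inequality gives three possibilities: both valuations are $0$; they are equal and negative; or one is $0$ and the other is positive. The first case is exactly a solution of the unit equation in $\mcO_F^\times$, so it is impossible. In the other two cases we only need to show that the nonzero valuation must be $\pm1$.

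\textbf{Case $t>0$, $s=0$.} Here $\lambda\in\mcO_F\cap\mcO_{S_F}^\times$ has $\lambda\mcO_F=\mfP^t=2^t\mcO_F$, because $2$ is inert. Hence $\lambda=2^t\varepsilon$ with $\varepsilon\in\mcO_F^\times$. Then $\mu=1-2^t\varepsilon$ is a unit, and we get $2^t\varepsilon+\mu=1$. Suppose $t\geq 2$. We reduce modulo the unique prime $\mfq$ above $l$, using the same argument as in the proof of Theorem~\ref{main result1 for unit equation}: every unit is $\equiv\pm1\pmod{\mfq}$. This gives $\pm 2^t\pm1\equiv 1\pmod l$, i.e.\ $2^t\equiv 0$ or $\pm2\pmod l$. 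That congruence alone does not rule out $t\geq2$, so a different idea is needed.

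The better route is to rewrite in terms of units. If $t$ is even, set $\alpha=2^{t/2}$ and factor
$$1-\alpha^2\varepsilon=\mu.$$
This does not obviously factor, since $\varepsilon$ need not be a square.

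We therefore refine the congruence step instead. Write $\lambda\equiv a\pmod{\mfq}$ with $a\in\Z$. The norm argument from the proof of Theorem~\ref{main result1 for unit equation}, applied to the $S_F$-unit $\lambda$, gives $N_{F/\Q}(\lambda)\equiv a^{[F:\Q]}\pmod l$. Since $2$ is inert, $N_{F/\Q}(\lambda)=\pm 2^{t[F:\Q]}$. Taking $[F:\Q]$-th roots as before, using $\gcd([F:\Q],\frac{l-1}{2})=1$, yields $a\equiv\pm 2^t\pmod l$. The same argument for $\mu$ gives $\mu\equiv\pm1$. So we need $\pm2^t\pm1\equiv1\pmod l$, i.e.\ $2^t\equiv\pm2\pmod l$ or $2^t\equiv 0$. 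This is the sharpest information available mod $\mfq$; I expect it is the main obstacle.

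To overcome it, I would work modulo $\mfq^2$, or equivalently modulo $l^2$ via norms, and use the non-Wieferich condition. The natural tool is the $\Z[x]$-congruence $C_{F,\lambda}(x)\equiv(x-a)^{[F:\Q]}\pmod l$ derived in that proof. Alternatively, I would use that $\lambda/2^{t-1}$ and $2$ are both handled by Theorem~\ref{main result1 for unit equation}. If the lemma is only needed to feed into \eqref{assumption for main result x^p+y^p=2^rz^p}, a cleaner route is to pass to the equation $2\varepsilon'+\mu'=1$ by a descent. I expect this case to be the main obstacle.

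\textbf{Case $t=s<0$.} Put $\lambda'=1/\lambda$ and $\mu'=-\mu/\lambda$. Then $\lambda'+\mu'=1$ with $v_\mfP(\lambda')=-t>0$ and $v_\mfP(\mu')=0$. This reduces to the previous case, which gives $-t=1$, i.e.\ $(t,s)=(-1,-1)$. By the symmetric argument with $\lambda$ and $\mu$ swapped, the case $t=0$, $s>0$ gives $s=1$. Together these yield exactly the three pairs $(1,0)$, $(0,1)$, $(-1,-1)$.
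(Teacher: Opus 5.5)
\textbf{Gap: the case $t=v_\mfP(\lambda)\ge 2$ is never ruled out.}

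Your case split is correct and matches the paper. So is the reduction of $t=s<0$ to the case $t>0$ via $(1/\lambda,-\mu/\lambda)$, and the appeal to Theorem~\ref{main result1 for unit equation} when $t=s=0$. But excluding $t\ge 2$ is the whole content of the lemma, and the proposal leaves it open. You correctly observe that reducing $\lambda=2^t\varepsilon$ modulo $\mfq$ only gives $2^{t-1}\equiv\pm1\pmod l$, which cannot exclude $t\ge2$. The routes you then suggest are not carried out and are not what is needed: working modulo $\mfq^2$, the non-Wieferich condition, or a ``descent'' to $2\varepsilon'+\mu'=1$. The non-Wieferich condition is not even a hypothesis of this lemma, which assumes directly that $2$ is inert in $F$. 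As written, the proposal does not prove the statement.

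\textbf{The missing idea.} Use $2$-adic information about $\mu$ instead of $l$-adic information about $\lambda$. This is where the hypothesis that $[K:\Q]$ is odd enters, and your proposal never uses it.
\begin{enumerate}
\item If $t\ge 2$, then $\mu=1-\lambda\equiv 1\pmod{\mfP^2}$, i.e.\ $\mu\equiv1\pmod{4\mcO_F}$.
\item Hence $\text{N}_{F/\Q}(\mu)\equiv 1\pmod 4$ by Lemma~\ref{Norm red lemma}. Since $\mu\in\mcO_F^\times$ has norm $\pm1$, this forces $\text{N}_{F/\Q}(\mu)=+1$.
\item The argument of Theorem~\ref{main result1 for unit equation} gives $\mu\equiv\pm1\pmod{\mfq}$.
\item The sign $+$ would give $\lambda\equiv 0\pmod{\mfq}$. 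This is impossible, because $\lambda$ is an $S_F$-unit and $\mfq\notin S_F$.
\item For the sign $-$, the congruence $C_{F,\mu}(x)\equiv (x+1)^{[F:\Q]}\pmod{l}$ gives $\text{N}_{F/\Q}(\mu)\equiv(-1)^{[F:\Q]}\pmod l$.
\item Since $[F:\Q]=l^n[K:\Q]$ is odd, this says $\text{N}_{F/\Q}(\mu)\equiv -1\pmod l$, contradicting $\text{N}_{F/\Q}(\mu)=1$ because $l$ is odd.
\end{enumerate}
With this step supplied, your remaining reductions complete the proof exactly as in the paper.
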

		\begin{proof}
			We first show that $v_\mfP(\lambda) <2$. If not, let $v_\mfP(\lambda) \geq 2$. This gives $\lambda \in \mcO_F$ and $\lambda \equiv 0 \pmod {\mfP^2}$. Since $\lambda+\mu=1$, we get $v_\mfP(\mu)=0$ and $\mu \equiv 1 \pmod {\mfP^2}$. This gives $\mu \in \mcO_F^\times$ and $\text{N}_{F/\Q}(\mu) \equiv 1 \pmod 4$, hence $\text{N}_{F/\Q}(\mu)=1$. 
			Since $l$ is totally ramified in both $K, \Q_{n,l}$, it follows that $l$ is totally ramified in $F=K_{n,l}$. Let $\mfq$ be the unique prime of $F$ lying above $l$.  From \eqref{congruence of norm}, it follows that $\mu \equiv \pm 1 \pmod \mfq$. If $\mu \equiv 1 \pmod \mfq$, then  $\lambda \equiv 0 \pmod \mfq$, which is not possible since $\lambda \in \mcO_{S_F}^\times$ and $\mfq  \notin S_F$. Hence $\mu \equiv -1 \pmod \mfq$. Similar to the proof of Theorem~\ref{main result1 for unit equation}, we get 
			$1=\text{N}_{F/\Q}(\mu) \equiv (-1)^{[F: \Q] }\pmod l$. Since $[K : \Q]$ is odd and $l$ is odd, it follows that $[F: \Q] $ is odd. Thus, we have $1 \equiv -1\pmod l$, which is not possible since $l \geq 5$.
			
			Next, we show that $v_\mfP(\lambda) >-2$. If not, let $v_\mfP(\lambda) \leq -2$. This gives $v_\mfP(\lambda)=v_\mfP(\mu)$. Choose $\lambda'=\frac{1}{\lambda}$ and $\mu'= \frac{-\mu}{\lambda}$. Therefore,  $\lambda', \mu' \in \mcO_{S_F}^\times$, $\lambda'+\mu'=1$ and $v_\mfP(\lambda') \geq 2$, which contradicts the previous case. Hence, we have $-2 < v_\mfP(\lambda) <2$ and by symmetry  $-2 <v_\mfP(\mu) <2$. Finally, if $v_\mfP(\lambda) =0=v_\mfP(\mu) $, then $\lambda, \mu \in \mcO_F^\times$, which is not possible by Theorem~\ref{main result1 for unit equation}. This finishes the proof of the lemma.
		\end{proof}	
		We now recall a result of \cite{FKS20a} that provides a necessary and sufficient condition for a prime to be inert in $\Q_{n,l}$.
		\begin{lem}{\cite[Lemma 2.1]{FKS20a}}
			\label{lem for p is inert in Q_{n,l}}
			Let $l \geq 3$ and $d\geq 2$ be distinct rational primes. Then $d$ is inert in $\Q_{n,l}$ if and only if $d^{l-1} \not\equiv 1 \pmod {l^2} $.
		\end{lem}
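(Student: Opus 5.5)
The plan is to analyse how a prime $d\neq l$ decomposes in the cyclotomic extension. The Galois group $\Gal(\Q_{n,l}/\Q)$ is cyclic of order $l^n$, and $d$ is unramified in $\Q_{n,l}$ because $\Q_{n,l}\subseteq \Q(\mu_{l^{n+1}})$ is ramified only at $l$. So $d$ is inert in $\Q_{n,l}$ if and only if its Frobenius generates $\Gal(\Q_{n,l}/\Q)$. Since this group is a cyclic $l$-group, that happens if and only if the Frobenius does not lie in the unique index-$l$ subgroup, i.e. if and only if $d$ is inert in the first layer $\Q_{1,l}$.

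Concretely, identify $\Gal(\Q(\mu_{l^{n+1}})/\Q)\cong(\Z/l^{n+1}\Z)^\times$, with the Frobenius at $d$ corresponding to $d \bmod l^{n+1}$. This group is $\mu_{l-1}\times(1+l\Z/l^{n+1}\Z)$, and $\Q_{n,l}$ is the fixed field of the torsion part $\mu_{l-1}$. The quotient map to $\Gal(\Q_{n,l}/\Q)$ therefore sends $d$ to the image of $d^{l-1}$ in $1+l\Z/l^{n+1}\Z$, up to the automorphism given by raising to the power $l-1$, which is invertible on this $l$-group since $l\nmid l-1$.

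The group $1+l\Z/l^{n+1}\Z$ is cyclic of order $l^n$ (this uses $l$ odd), generated by any element $\equiv 1 \pmod l$ but $\not\equiv 1\pmod{l^2}$. Its unique index-$l$ subgroup is $1+l^2\Z/l^{n+1}\Z$. Hence the image of $d$ generates if and only if $d^{l-1}\not\equiv 1\pmod{l^2}$, which gives both directions at once.

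The main point needing care is the structure of $(\Z/l^{n+1}\Z)^\times$, specifically cyclicity and the description of the index-$l$ subgroup, which requires $l\ge 3$. The rest is standard Frobenius bookkeeping: in an abelian extension, the residue degree equals the order of the Frobenius, so inertness is equivalent to the Frobenius having order $l^n$.
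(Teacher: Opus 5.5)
Your proof is correct and complete. Since $d\neq l$ is unramified, inertness is equivalent to $\mathrm{Frob}_d$ generating the cyclic group $\Gal(\Q_{n,l}/\Q)\cong(\Z/l^{n+1}\Z)^\times/\mu_{l-1}$. The map $x\mapsto x^{l-1}$ identifies this quotient with $1+l\Z/l^{n+1}\Z$ and sends $\mathrm{Frob}_d$ to $d^{l-1}$, and an element of that cyclic $l$-group is a generator exactly when it avoids $1+l^2\Z/l^{n+1}\Z$, i.e.\ when $d^{l-1}\not\equiv 1 \pmod{l^2}$.

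The paper gives no proof of its own here; it simply imports the statement from \cite[Lemma 2.1]{FKS20a}, so there is no route to compare against. Your Frobenius computation is a valid self-contained proof of the cited fact. It implicitly uses $n\geq 1$ (for $n=0$ there is no index-$l$ subgroup and the equivalence fails), which matches the paper's standing assumption.
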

		We are now ready to prove Theorems~\ref{main result2} and~\ref{main result3}.
		\begin{proof}[Proof of Theorem~\ref{main result2}]
			Recall that $K_{n,l}=K \cdot \Q_{n,l} $ and $[\Q_{n,l} : \Q]=l^n$.  
			Since $l \nmid [K:\Q]$, we get $ \gcd([K: \Q], [\Q_{n,l} : \Q])=1$ and $[K_{n,l} : \Q]=l^n[K: \Q]$. 
			Since $2^{l-1} \not\equiv 1 \pmod {l^2} $, by Lemma~\ref{lem for p is inert in Q_{n,l}}, it follows that $2$ is inert in $\Q_{n,l}$. Since $2$ is inert in $K$ and $ \gcd([K: \Q], [\Q_{n,l} : \Q])=1$, we conclude that $2$ is inert in $F=K_{n,l}$. By Lemma~\ref{lem for solution of S-unit eqn}, it follows that every solution $(\lambda, \mu)$ to the $S_{F}$-unit equation
			$\lambda+\mu=1, \ \lambda, \mu \in \mcO_{S_F}^\times$ satisfies
			$ \left(v_\mfP(\lambda), v_\mfP(\mu)\right) \in \{(1,0), (0,1), (-1,-1)\}$. Since $K$ and $\Q_{n,l} $ are totally real, it follows that $K_{n,l}$ is totally real. Since both $[K: \Q]$ and $l $ are odd, we get $[K_{n,l}:\Q]$ is odd for all integers $n \geq 1$. Finally, applying Theorem~\ref{thm for S-unit crit even} with $K=K_{n,l}$ and $A=B=C=1$, the proof of the theorem follows.
		\end{proof}
		\begin{proof}[Proof of Theorem~\ref{main result3}]
			Using the same arguments in the proof of Theorem~\ref{main result2}, we conclude that $2$ is inert in $F=K_{n,l}$ and the $S_F$-unit equation
			$\lambda+\mu=1, \ \lambda, \mu \in \mcO_{S_F}^\times$ satisfies
			$ \left(v_\mfP(\lambda), v_\mfP(\mu)\right) \in \{(1,0), (0,1), (-1,-1)\}$.
			Since $A,B,C \in \{u2^r:  u\in\mcO_K^\times, r \in \Z_{\geq 0}\}$, we get $S_{K_{n,l}}^\prime=S_{K_{n,l}}$ for all integers $n \geq 1$.
			Since $K_{n,l}$ is totally real and $[K_{n,l}: \Q]$ is odd for all integers $n \geq 1$, the proof of the theorem follows from Theorem~\ref{thm for S-unit crit even}.
		\end{proof}
		\subsection{Proof of Theorems~\ref{main result4 for K} and~\ref{main result4}}
		The following criterion of the asymptotic generalized Fermat equation is a special case of \cite[Theorem 2.5]{S26}, and will play an important role in the proof of Theorems~\ref{main result4 for K} and~\ref{main result4}.
		\begin{thm}
			\label{thm for S-unit crit all ABC}
			Let $K$ be a totally real number field in which $2$ is inert and write $\mfP=2\mcO_K$. Let $A,B,C \in \mcO_K\setminus \{0\}$. Suppose every solution $(\lambda, \mu)$ to the $S_K^\prime$-unit equation
			$\lambda+\mu=1, \ \lambda, \mu \in \mcO_{S_K^\prime}^\times$
			satisfies
			\begin{equation}
				\label{assumption on S unit crit even}
				\max \left\{|v_\mfP(\lambda)|,|v_\mfP(\mu)| \right\}\leq 4.
			\end{equation}
			Then, the generalized Fermat equation $Ax^p+By^p+Cz^p=0$ has no asymptotic solution $(a,b,c) \in \mcO_K^3$ with $\mfP |abc$. Moreover, if all elliptic curves $E/K$ with full $2$-torsion are modular, then the equation $Ax^p+By^p+Cz^p=0$ has no effective asymptotic solution $(a,b,c) \in \mcO_K^3$ with $\mfP |abc$.
		\end{thm}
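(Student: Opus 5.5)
The statement is the special case of \cite[Theorem 2.5]{S26} with $2$ inert in $K$, so one could simply cite it. To see why the bound \eqref{assumption on S unit crit even} is exactly what is needed, I would reprove it along the Freitas--Siksek lines of \cite{FS15}, adapted as in \cite{S26}.

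\textbf{Frey curve.} Let $(a,b,c)\in\mcO_K^3$ be a non-trivial primitive solution with $\mfP\mid abc$ and $p$ large. I would attach the Frey curve
$$E:\ Y^2=X(X-Aa^p)(X+Bb^p),$$
after scaling so that it is semistable away from $S_K'$. Then:
\begin{enumerate}
\item By the modularity theorem of Freitas--Le Hung--Siksek, only finitely many $\bar{K}$-isomorphism classes of elliptic curves over $K$ fail to be modular. Hence $E$ is modular once $p$ is large. If all $E/K$ with full $2$-torsion are modular, no such exclusion is needed, and this is the source of effectivity.
\item The mod $p$ representation $\bar\rho_{E,p}$ is irreducible for $p$ larger than an effective constant, by the Freitas--Siksek irreducibility result for semistable-away-from-$S$ curves.
\item Since $\mfP\mid abc$, $E$ has potentially multiplicative reduction at $\mfP$, and $p\mid v_\mfP(j_E)$ after adjusting for the bounded contribution of $A,B,C$.
\end{enumerate}

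\textbf{Level lowering and the curve $E'$.} Level lowering (Fujiwara, Jarvis, Rajaei) then gives a Hilbert newform $\mathfrak f$ of parallel weight $2$ with $\bar\rho_{E,p}\sim\bar\rho_{\mathfrak f,\varpi}$. Its level $\mathcal N_p$ lies in a finite set depending only on $K,A,B,C$. Enlarging $p$ and using the standard argument of \cite{FS15}, $\mathfrak f$ has rational eigenvalues. Potentially multiplicative reduction at $\mfP$ forces $\mathfrak f$ to be Steinberg at $\mfP$. This is the delicate point: it is what lets Blasius/Hida produce an elliptic curve $E'/K$ of conductor $\mathcal N_p$ attached to $\mathfrak f$ without assuming Eichler--Shimura in general. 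For large $p$, comparing traces of Frobenius shows that $E'$ has full $2$-torsion and potentially multiplicative reduction at $\mfP$, so $v_\mfP(j_{E'})<0$.

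\textbf{Contradiction via the unit equation.} Write $E':Y^2=(X-e_1)(X-e_2)(X-e_3)$ and put $\lambda=\frac{e_3-e_1}{e_2-e_1}$, $\mu=1-\lambda$. Good reduction of $E'$ outside $S_K'$ makes $(\lambda,\mu)$ a solution of the $S_K'$-unit equation. Moreover
$$j_{E'}=2^8\frac{(\lambda^2-\lambda+1)^3}{\lambda^2\mu^2}.$$
I would compute $v_\mfP(j_{E'})$ case by case, using $v_\mfP(2)=1$, $\lambda+\mu=1$ and the hypothesis \eqref{assumption on S unit crit even}:
\begin{enumerate}
\item If $v_\mfP(\lambda)=t>0$, then $v_\mfP(\mu)=0$ and $v_\mfP(\lambda^2-\lambda+1)=0$. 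So $v_\mfP(j_{E'})=8-2t\ge0$ since $t\le 4$. The case $v_\mfP(\mu)=t>0$ is symmetric.
\item If $v_\mfP(\lambda)=-t<0$, then $v_\mfP(\mu)=-t$ and $v_\mfP(\lambda^2-\lambda+1)=-2t$. So $v_\mfP(j_{E'})=8-6t+4t=8-2t\ge0$.
\item If $v_\mfP(\lambda)=v_\mfP(\mu)=0$, then $v_\mfP(\lambda^2-\lambda+1)\ge 0$, so $v_\mfP(j_{E'})\ge 8$.
\end{enumerate}
In every case $v_\mfP(j_{E'})\ge 0$, contradicting potentially multiplicative reduction at $\mfP$. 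Therefore no such solution exists for $p$ beyond the (effective, under the modularity hypothesis) bound.

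I expect the main obstacle to be the existence of $E'$, namely the rationality of eigenvalues together with the Steinberg-at-$\mfP$ input. The rest is bookkeeping, already carried out in \cite{S26}.
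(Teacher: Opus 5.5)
Your opening sentence is exactly what the paper does. The statement is quoted as a special case of \cite[Theorem 2.5]{S26} with no further argument, so as a proof the citation suffices.

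The reproof you then sketch goes wrong at the one step that carries it: the behaviour at $\mfP$. What is needed is $p\nmid v_\mfP(j_E)$, not $p\mid v_\mfP(j_E)$. Since $\mfP\nmid p$ and $E$ is potentially multiplicative at $\mfP$, the condition $p\nmid v_\mfP(j_E)$ gives $p\mid \#\bar\rho_{E,p}(I_\mfP)$. This passes through $\bar\rho_{E,p}\sim\bar\rho_{\mathfrak f,\varpi}\sim\bar\rho_{E',p}$ and has two consequences:
\begin{itemize}
\item it rules out potentially good reduction of $E'$ at $\mfP$, where the inertia image would have order dividing $24$;
\item it shows that $\mathfrak f$ is not principal series at $\mfP$, which you need for Blasius--Hida because $[K:\Q]$ may be even.
\end{itemize}
Potentially multiplicative reduction of $E$ by itself forces none of this. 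If $p\mid v_\mfP(j_E)$, then $\bar\rho_{E,p}$ is unramified at $\mfP$ up to a quadratic twist. Nothing then stops the level-lowered form from being principal series at $\mfP$, possibly even of level prime to $\mfP$. Likewise, comparing Frobenius traces gives the $2$-torsion of $E'$ (after passing to an isogenous curve), but says nothing about its reduction type at $\mfP$.

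Your ``bounded contribution of $A,B,C$'' is also not automatically harmless. For large $p$, exactly one of $a,b,c$ is divisible by $\mfP$. If it is $b$, then $v_\mfP(A)=v_\mfP(C)$ and
\[
v_\mfP(j_E)=8+2v_\mfP(A)-2v_\mfP(B)-2p\,v_\mfP(b).
\]
So $p\nmid v_\mfP(j_E)$ for large $p$ only when $v_\mfP(B)-v_\mfP(A)\neq 4$, with analogous conditions if $\mfP\mid a$ or $\mfP\mid c$. For example, it fails for $x^p+16y^p+z^p=0$ with $\mfP\mid y$. So your sketch, as written, does not cover all $A,B,C$. Such cases must be excluded or treated separately, or you are relying on the citation for them.

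The remaining steps are the standard Freitas--Siksek argument and are fine. The integrality argument does make $(\lambda,\mu)$ an $S_K'$-unit solution. Your valuation computation is also correct: $\max\{|v_\mfP(\lambda)|,|v_\mfP(\mu)|\}=t>0$ gives $v_\mfP(j_{E'})=8-2t$, and $t=0$ gives $v_\mfP(j_{E'})\ge 8$. This is precisely why the bound $4$ is the right hypothesis.
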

		The following proposition is a key ingredient in the proof of Theorems~\ref{main result4 for K} and~\ref{main result4}. More precisely, we prove that for any number field $K$ with $S_K^{\prime}= \{ \mfP \in P_K :\ \mfP|2d\}$, every solution $(\lambda, \mu)$ to the $S_K^\prime$-unit equation
		$\lambda+\mu=1, \ \lambda, \mu \in \mcO_{S_K^\prime}^\times$ satisfies \eqref{assumption on S unit crit even}.
		\begin{prop}
			\label{prop for solution of S-unit eqn even soln}
			\label{Lemma for S unit crit 2d}
			Let $K$ be a number field in which $2$ is inert and write $\mfP=2\mcO_K$. Let $d\geq 3$ be a prime and $S_K^{\prime}= \{ \mfP \in P_K :\ \mfP|2d\}$. Assume
			\begin{enumerate}
				\item $2 \nmid h_K^+$;
				\item $d \equiv 1 \pmod 4$ and $d$ is inert in $K$;
				\item the congruence $d \equiv v^2 \mod \mfP^5$ with $v\in \mcO_K$ has no solutions.
			\end{enumerate}
			Then every solution $(\lambda, \mu)$ to the $S_K^\prime$-unit equation
			$\lambda+\mu=1, \ \lambda, \mu \in \mcO_{S_K^\prime}^\times$ satisfies $$\max \left\{|v_\mfP(\lambda)|,|v_\mfP(\mu)| \right\}\leq 4.$$
		\end{prop}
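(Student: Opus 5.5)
The plan is to argue by contradiction, using the $S_3$-symmetry of the unit equation to reduce to one normalized case and then class field theory to control square roots. The orbit of $(\lambda,\mu)$ under $(\lambda,\mu)\mapsto(\mu,\lambda),(1/\lambda,-\mu/\lambda)$ consists of solutions of the same equation. If $\max\{|v_\mfP(\lambda)|,|v_\mfP(\mu)|\}=t\ge 5$, then, exactly as in the proof of Lemma~\ref{lem for solution of S-unit eqn}, we may replace the pair by a member of its orbit and assume $v_\mfP(\lambda)=t\geq 5$ and $v_\mfP(\mu)=0$. Then $\mu\equiv 1\pmod{\mfP^5}$.

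Since $2$ and $d$ are both inert, $S_K^\prime=\{\mfP, d\mcO_K\}$ and both primes are principal. Hence we can write $\mu=\varepsilon d^k$ with $\varepsilon\in\mcO_K^\times$ and $k\in\Z$. The key tool is the following fact. If $\alpha\in K^\times$ is such that $K(\sqrt{\alpha})/K$ is unramified at every finite prime, then $K(\sqrt\alpha)$ lies in the narrow Hilbert class field of $K$. Since $2\nmid h_K^+$, this forces $\alpha\in (K^\times)^2$.

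\emph{Case $k$ odd.} First, $K(\sqrt{d})/K$ is unramified at $\mfP$ because $d\equiv 1\pmod 4$, so $\Q(\sqrt d)/\Q$ is unramified at $2$. Second, $K(\sqrt{\mu})=K(\sqrt{\varepsilon d})$ is unramified, indeed split, at $\mfP$, because $\mu\equiv 1\pmod{\mfP^5}$ and $\mfP^5\subseteq 4\mfP$. It follows that $K(\sqrt{\varepsilon})/K$ is unramified at $\mfP$. It is trivially unramified at every odd prime, since $\varepsilon$ is a unit. By the fact above, $\varepsilon=w^2$ with $w\in\mcO_K^\times$. Then $d^k\equiv w^{-2}\pmod{\mfP^5}$, and since $k$ is odd, $d\equiv (w^{-(1)}d^{-(k-1)/2})^2\pmod{\mfP^5}$. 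This is a solution of the congruence in hypothesis (3), a contradiction.

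\emph{Case $k$ even.} Here $K(\sqrt{\varepsilon})=K(\sqrt{\mu})$ is again unramified everywhere finite, so $\varepsilon$ is a square and $\mu=v^2$ with $v\in\mcO_{S_K^\prime}^\times$ and $v_\mfP(v)=0$. Then $\lambda=(1-v)(1+v)$, and both factors are $S_K^\prime$-units. Since $(1+v)-(1-v)=2v$ has $\mfP$-valuation $1$, after replacing $v$ by $-v$ we get $v_\mfP(1+v)=1$ and $v_\mfP(1-v)=t-1$. This yields a new solution
\[
\lambda_1=\frac{1-v}{2},\qquad \mu_1=\frac{1+v}{2},
\]
with $v_\mfP(\lambda_1)=t-2$ and $v_\mfP(\mu_1)=0$.

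The main obstacle is closing this even case. The descent lowers $t$ by $2$, but landing at $t-2\in\{3,4\}$ is not a contradiction by itself. My first attempt would be to run the argument on a counterexample with $t$ minimal and to track the $d$-exponent of $\mu_1=\varepsilon_1 d^{k_1}$. If $k_1$ is odd, the first case applies once $t-2\ge5$. Otherwise I would exploit the finer congruence $v\equiv 1\pmod{\mfP^{t-1}}$, which gives $\mu_1\equiv 1\pmod{\mfP^{t-2}}$. I would also combine the two sub-cases: since $d^{k}$ with $k$ even is a square, one can compare $\varepsilon_1$ with $d$ modulo $\mfP^5$ and again reach hypothesis (3). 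I expect the delicate bookkeeping to lie in these exponents modulo $\mfP^5$.
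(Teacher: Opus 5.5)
Your normalization to $v_\mfP(\lambda)=t\geq 5$, $v_\mfP(\mu)=0$ is sound, and so are the class-field-theoretic criterion and the odd-$k$ case. In the odd case, your local argument that $K(\sqrt{\varepsilon})/K$ is unramified at $\mfP$ is a valid alternative to the paper's. The paper instead notes that $\varepsilon\equiv 1\pmod 4$, because $d^k\equiv 1\pmod 4$ for every $k$. It then uses the polynomial $x^2-x+\frac{1-\varepsilon}{4}$, whose discriminant $\varepsilon$ is a unit.

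The gap is the even-$k$ case, which you leave open, and the route you sketch cannot close it. The map $(\lambda,\mu)\mapsto\bigl(\frac{1-v}{2},\frac{1+v}{2}\bigr)$ sends $t$ to $t-2$. A minimal counterexample with $t\in\{5,6\}$ therefore descends to $t-2\in\{3,4\}$, which the bound allows, so there is no contradiction. Bookkeeping modulo $\mfP^5$ does not help at that level. You only know $\mu_1\equiv 1\pmod{\mfP^{t-2}}$ with $t-2\leq 4$, and hypothesis (3) says nothing about squares modulo $\mfP^3$ or $\mfP^4$.

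The missing idea is to move toward larger valuation. With $\mu=v^2$, $v_\mfP(1+v)=1$ and $v_\mfP(1-v)=t-1$, set
\[
\lambda''=-\frac{(1-v)^2}{4v},\qquad \mu''=\frac{(1+v)^2}{4v}.
\]
This is again a solution of the $S_K'$-unit equation. It satisfies $v_\mfP(\mu'')=0$ and $v_\mfP(\lambda'')=2(t-1)-2=2t-4>t$, since $t\geq 5$.

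This gives a contradiction once you invoke finiteness of the set of solutions of the $S_K'$-unit equation (Siegel--Mahler). That lets you choose, at the outset, a counterexample with $v_\mfP(\lambda)$ \emph{maximal} rather than minimal, which is exactly the paper's argument. Equivalently, $(\lambda'',\mu'')$ is again normalized with $v_\mfP(\lambda'')\geq 5$, so your square-class analysis applies to it again; iterating produces infinitely many solutions.

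Your odd case needs no change under the maximal choice. It already gives a contradiction for every normalized solution with $t\geq 5$.
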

		To prove Proposition~\ref{prop for solution of S-unit eqn even soln}, we need the following lemma, which is a special case of \cite[Lemma 5.2]{JS25}.
		\begin{lem}
			\label{integrality of lambda, mu}
			Let $K$ be a number field and $S_K^{\prime}= \{ \mfP \in P_K :\ \mfP|2d\}$. 
			Let $(\lambda, \mu)$ be a solution to the $S_K^\prime$-unit equation
			$\lambda+\mu=1, \ \lambda, \mu \in \mcO_{S_K^\prime}^\times$ and let $\mfP \in S_{K}$. Then there exists $\lambda', \mu' \in \mcO_{S_K^\prime}^\times$ with $v_\mfP(\lambda') \geq 0$ and $v_\mfP(\mu') \geq 0$ such that $\lambda'+\mu'=1$ and $	\max \left\{|v_\mfP(\lambda)|,|v_\mfP(\mu)| \right\}=	\max \left\{v_\mfP(\lambda'),v_\mfP(\mu') \right\}$.
		\end{lem}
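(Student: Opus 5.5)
The plan is a direct case analysis on the signs of $v_\mfP(\lambda)$ and $v_\mfP(\mu)$, using the symmetries $(\lambda,\mu)\mapsto(1/\lambda,-\mu/\lambda)$ and $(\lambda,\mu)\mapsto(-\lambda/\mu,1/\mu)$ of the unit equation. Both maps preserve the equation $\lambda+\mu=1$ and the group $\mcO_{S_K^\prime}^\times$. The first step is the standard ultrametric observation: since $\lambda+\mu=1$ and $v_\mfP(1)=0$, either both valuations are nonnegative, or $v_\mfP(\lambda)=v_\mfP(\mu)<0$. Indeed, if one of them, say $v_\mfP(\lambda)$, were negative while $v_\mfP(\mu)\neq v_\mfP(\lambda)$, then $v_\mfP(\lambda+\mu)=\min\{v_\mfP(\lambda),v_\mfP(\mu)\}<0$, which is a contradiction. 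The same argument shows that if one valuation is strictly positive, the other is $0$.

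\emph{Case 1:} $v_\mfP(\lambda)\geq 0$ and $v_\mfP(\mu)\geq 0$. Take $\lambda'=\lambda$ and $\mu'=\mu$. Then $|v_\mfP(\lambda)|=v_\mfP(\lambda')$ and $|v_\mfP(\mu)|=v_\mfP(\mu')$, so the equality of maxima is trivial.

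\emph{Case 2:} $v_\mfP(\lambda)=v_\mfP(\mu)=-k$ with $k>0$. Take $\lambda'=1/\lambda$ and $\mu'=-\mu/\lambda$. Both lie in $\mcO_{S_K^\prime}^\times$, because $S$-units form a group containing $-1$. Dividing $\lambda+\mu=1$ by $\lambda$ gives $1+\mu/\lambda=1/\lambda$, that is, $\lambda'+\mu'=1$. The valuations are
\[
v_\mfP(\lambda')=k\geq 0, \qquad v_\mfP(\mu')=v_\mfP(\mu)-v_\mfP(\lambda)=0 .
\]
Hence $\max\{v_\mfP(\lambda'),v_\mfP(\mu')\}=k=\max\{|v_\mfP(\lambda)|,|v_\mfP(\mu)|\}$.

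By the first step these two cases are exhaustive, which proves the lemma. There is no real obstacle here. The only point requiring care is the ultrametric dichotomy, which guarantees that the two valuations coincide whenever one is negative. This is exactly what makes $v_\mfP(\mu')=0$ in Case 2, and hence makes the maximum come out right. It is the same inversion trick used in the proof of Lemma~\ref{lem for solution of S-unit eqn}.
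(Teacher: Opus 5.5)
Your proof is correct and complete: the ultrametric dichotomy (either both valuations are nonnegative or $v_\mfP(\lambda)=v_\mfP(\mu)<0$), followed by the inversion $(\lambda,\mu)\mapsto(1/\lambda,-\mu/\lambda)$, is the standard argument. The paper gives no proof of this lemma, quoting it instead as a special case of Lemma 5.2 of [JS25], but it uses the same dichotomy and inversion in the second paragraph of the proof of Lemma~\ref{lem for solution of S-unit eqn}, so your route is essentially the paper's.
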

		\begin{proof}[Proof of Proposition~\ref{prop for solution of S-unit eqn even soln}]
			The proof of this proposition follows arguments similar to those used in the proofs of \cite[Theorem 6]{M23} and \cite[Proposition 1.9]{S25}.
			We need to show that $\max \left\{|v_\mfP(\lambda)|,|v_\mfP(\mu)| \right\}\leq 4$ for every solution $(\lambda, \mu)$ to the $S_K^\prime$-unit equation
			$\lambda+\mu=1, \ \lambda, \mu \in \mcO_{S_K^\prime}^\times$. Suppose there exists a solution $(\lambda, \mu)$ with $\max \left\{|v_\mfP(\lambda)|,|v_\mfP(\mu)| \right\}\geq 5$. Without loss of generality by Lemma~\ref{integrality of lambda, mu}, we can take $v_\mfP(\lambda) \geq 0$ and $v_\mfP(\mu) \geq 0$, hence $\max \left\{v_\mfP(\lambda),v_\mfP(\mu)\right\}\geq 5$.  Without loss of generality, take $v_\mfP(\lambda) \geq 5$. Since the $S_K^\prime$-unit equation	$\lambda+\mu=1$ admits only finitely many solutions, we  choose $(\lambda^\prime, \mu^\prime=1-\lambda^\prime) $ such that $v_\mfP(\lambda^\prime)$ is maximal among all such solutions. Therefore $v_\mfP(\lambda') \geq 5$, hence $v_\mfP(\mu') =0$ and $\mu'\equiv 1\pmod {\mfP^5} $.
			
			We now show that $\mu'$ is a square in $\mcO_{S_{K}'}^\times$.
			Since $v_\mfP(\mu')=0$, we get 
			\begin{equation}
				\label{factor of mu}
				\mu'= \alpha d^s,
			\end{equation}
			for some $\alpha \in \mcO_{K}^\times$ and $s\in \Z$. 
			First, we show that $\alpha$ is a square. Suppose $\alpha$ is not a square in $\mcO_{K}$. Consider the field $L=K(\sqrt{\alpha})$. Then $[L:K]=2$. 
			Since $d \equiv 1 \pmod {4}$, we have $d ^s \equiv 1 \pmod {4}$. From \eqref{factor of mu}, we deduce that $\alpha \equiv 1 \pmod {\mfP^2}. $ and hence $\frac{1-\alpha}{4} \in \mcO_K$. Then $L=K(\sqrt{\alpha})=K(\beta)$, where $\beta=\frac{1-\sqrt{\alpha}}{2}$. Then the minimal polynomial  $m_\beta(x)= x^2-x+  \frac{1- \alpha}{4} \in \mcO_{K}[x]$ with its discriminant $\alpha \in  \mcO_{K}^\times$. 
			Therefore, $L$ is unramified at all the finite places of $K$ and $[L:K]=2$, which contradicts the hypothesis $2\nmid h_{K}^+$. Hence, $\alpha$ must be a square.
			
			Next, we show that $s$ is even. Suppose $s$ is odd. Let $s=2k+1$ for some $k \in \Z$. From \eqref{factor of mu}, we get $\mu '= (\alpha d^{2k})d$. Since $\alpha$ is a square and $\mu'\equiv 1\pmod {\mfP^5} $, it follows that $d \equiv v^2\pmod {\mfP^5} $, for some $v \in \mcO_K$, which contradicts the hypothesis $(3)$. Therefore,, $s$ is even. Hence, $\mu'$ is a square in $\mcO_{S_{K}'}^\times$.
			
			Let $\mu'=\gamma^2$ for some $\gamma \in \mcO_{S_{K}'}^\times$. Then $ v_\mfP(\mu') =v_\mfP(\gamma)=0$. So 
			$$\lambda'= 1-\mu'=1-\gamma^2= (1+\gamma)(1-\gamma).$$
			Choose $s_0= v_\mfP(\lambda')$, $s_1= v_\mfP(1+\gamma)$ and $s_2= v_\mfP(1-\gamma)$. This gives $s_0= s_1+s_2\geq 5$, $s_1 \geq 0$ and $s_2 \geq 0$. By a simple calculation, we get either $s_1=1$ or $s_2=1$. Without loss of generality, we take $s_1=1$. This gives $s_2= s_0-1 \geq 4$.
			Choose $\lambda''= \frac{-(1-\gamma)^2}{4\gamma}$ and $\mu''= \frac{(1+\gamma)^2}{4\gamma}$. Then $\lambda'' + \mu ''=1$. So $(\lambda'', \mu'')$ is a solution to the $S_{K}'$-unit equation $\lambda+\mu=1$. We have $v_\mfP(\lambda'')=2  s_2-2=2  s_0-4>s_0= v_\mfP(\lambda')$, which is not possible since $v_\mfP(\lambda')$ is maximal among all the solutions to the $S_{K}'$-unit equation.
			This finishes the proof of the proposition.
		\end{proof}
		The following lemma is very useful in the proof of Theorems~\ref{main result4 for K} and~\ref{main result4}.
		\begin{lem} 
			\label{Norm red lemma}
			Let $F$ be a number field and $n \in \N$. Assume $2$ is inert in $F$ and write $\mfP=2\mcO_F$. For any $\lambda, \nu \in \mcO_{F}$ , if $\lambda \equiv \nu \pmod {\mfP^n}$, then $\text{N}_{F/\Q}(\lambda) \equiv \text{N}_{F/ \Q}(\nu) \pmod {2^n}$.
		\end{lem}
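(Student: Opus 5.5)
The plan is to show that the norm map, viewed modulo $2^n$, only depends on the residue class modulo $\mfP^n$; here $\mfP^n = 2^n\mcO_F$ because $2$ is inert in $F$. There are two natural routes, and I would write the characteristic polynomial one and keep the Galois one as a remark.

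\emph{Route 1: Galois closure.} Let $L$ be the normal closure of $F/\Q$. The hypothesis $\lambda \equiv \nu \pmod{\mfP^n}$ means $\lambda - \nu \in 2^n\mcO_F$. Each embedding $\sigma\colon F \to L$ fixes $2^n$, so $\sigma(\lambda) - \sigma(\nu) \in 2^n\mcO_L$. Writing $\sigma(\lambda) = \sigma(\nu) + 2^n\delta_\sigma$ with $\delta_\sigma \in \mcO_L$, we get
\[
\text{N}_{F/\Q}(\lambda) = \prod_\sigma \sigma(\lambda) \equiv \prod_\sigma \sigma(\nu) = \text{N}_{F/\Q}(\nu) \pmod{2^n\mcO_L}.
\]
Both norms lie in $\Z$, and $2^n\mcO_L \cap \Z = 2^n\Z$ since $1/2^n \notin \mcO_L$. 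This gives the congruence in $\Z$.

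\emph{Route 2: characteristic polynomial (avoids $L$).} The norm equals the determinant of multiplication by the element on a $\Z$-basis of $\mcO_F$. Multiplication by $\lambda - \nu = 2^n\delta$, with $\delta \in \mcO_F$, has integer matrix $2^n D$. Hence $M_\lambda = M_\nu + 2^n D$ with all entries integral, and taking determinants modulo $2^n$ yields the claim.

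\emph{Main obstacle.} There is essentially none. The only point needing care is the initial translation $\mfP^n = 2^n\mcO_F$, which is where inertness of $2$ is used, and then descending the congruence from $\mcO_L$ (or from the matrix entries) back to $\Z$.
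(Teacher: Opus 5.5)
Your proposal is correct. Your Route~1 is exactly the paper's proof: pass to the normal closure, use that every embedding sends $\lambda-\nu\in 2^n\mcO_F$ into $2^n\mcO_L$, multiply the congruences, and descend to $\Z$. For the descent, the cleanest reason is $\mcO_L\cap\Q=\Z$: if $m\in 2^n\mcO_L\cap\Z$ then $m/2^n\in\mcO_L\cap\Q=\Z$.

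Your preferred Route~2 is genuinely different. Writing $\lambda-\nu=2^n\delta$ with $\delta\in\mcO_F$, the multiplication matrices on an integral basis satisfy $M_\lambda=M_\nu+2^nM_\delta$ with integer entries. Since $\det$ is an integer polynomial in the entries, the congruence modulo $2^n$ is immediate.

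What Route~2 buys is a proof that never leaves $\Z$: there is no normal closure and no descent step. The paper handles that descent somewhat tersely, by working ``modulo $\mfP^n$'' inside the normal closure and then appealing to the norms being integers.

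The Galois argument, on the other hand, transfers verbatim to any modulus stable under $\Gal(L/\Q)$. This is how the paper argues in the proof of Theorem~\ref{main result1 for unit equation}, with the ideal $\mfq\mcO_L$ above a totally ramified $l$. The matrix argument as you state it relies on $\lambda-\nu$ being divisible in $\mcO_F$ by a rational integer, which is precisely what inertness of $2$ provides here.

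Both routes in fact prove a slightly more general statement. For any $m\in\N$, $\lambda\equiv\nu\pmod{m\mcO_F}$ implies $\text{N}_{F/\Q}(\lambda)\equiv\text{N}_{F/\Q}(\nu)\pmod m$. Inertness is used only to identify $\mfP^n$ with $2^n\mcO_F$.
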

		\begin{proof}
			Let $M$ be the normal closure of $F/\Q$. Then for every $\sigma \in \Gal(M/ \Q)$, we have $\sigma(\lambda )\equiv \sigma(\nu) \pmod {\mfP^n}$. This gives 
			$\text{N}_{F/\Q}(\lambda) \equiv \text{N}_{F/ \Q}(\nu) \pmod {\mfP^n}$. Since $\text{N}_{F/\Q}(\lambda), \text{N}_{F/ \Q}(\nu) \in \Z$, the proof of the lemma follows.
		\end{proof}
		We are now ready to prove Theorems~\ref{main result4 for K} and~\ref{main result4}.
		\begin{proof}[Proof of Theorem~\ref{main result4 for K}]
			Since $A,B,C \in \{\pm 2^rd^s: r,s \in \Z_{\geq 0}\}$, it follows that $S_K^{\prime}= \{ \mfP \in P_K :\ \mfP|2d\}$.
			To prove this theorem, it suffices to verify that all the hypotheses of Proposition~\ref{prop for solution of S-unit eqn even soln} are satisfied. 
			Clearly, the hypothesis $(1)$, $(2)$ are satisfied. 
			Suppose the hypothesis $(3)$ is not true, i.e., $d \equiv v^2 \mod \mfP^5$ for some $v\in \mcO_K$.
			By Lemma~\ref{Norm red lemma}, we get $d^{[K: \Q]} \equiv a^2 \pmod {32}$, for some $a \in \Z$. Since the only odd squares modulo $32$ are $\{1,9, 17, 25\}$, we get $d^{[K: \Q]} \equiv 1$ or $9$ or $17$ or $25$ $ \pmod {32}$, which contradicts the hypothesis $(3)$ of Theorem~\ref{main result4 for K}. Thus by Proposition~\ref{prop for solution of S-unit eqn even soln}, it follows that every solution $(\lambda, \mu)$ to the $S_K^\prime$-unit equation
			$\lambda+\mu=1, \ \lambda, \mu \in \mcO_{S_K^\prime}^\times$ satisfies $$\max \left\{|v_\mfP(\lambda)|,|v_\mfP(\mu)| \right\}\leq 4.$$
			Finally, the proof of the theorem follows from Theorem~\ref{thm for S-unit crit all ABC}.
		\end{proof}
		\begin{proof}[Proof of Theorem~\ref{main result4}]
			Since, $A,B,C \in \{\pm 2^rd^s: r,s \in \Z_{\geq 0}\}$, we get $S_K^{\prime}= \{ \mfP \in P_K :\ \mfP|2d\}$.
			To prove this theorem, it suffices to verify that all the hypotheses of Proposition~\ref{prop for solution of S-unit eqn even soln} are satisfied for $K= \Q_{n,l}$. 
			Since $2^{l-1}\not\equiv 1 \pmod {l^2} $ and $d^{l-1} \not\equiv 1 \pmod {l^2} $, it follows from Lemma~\ref{lem for p is inert in Q_{n,l}} that both $2$ and $d$ are inert in $\Q_{n,l}$.  Hence the hypotheses $(1)$, $(2)$ of Proposition~\ref{prop for solution of S-unit eqn even soln} are satisfied for  $K=\Q_{n,l}$. 
			
			Suppose the hypothesis $(3)$ is not true $K=\Q_{n,l}$, i.e., $d \equiv v^2 \mod \mfP^5$ for some $v\in \mcO_{\Q_{n,l}}$,  where $\mfP=2\mcO_{\Q_{n,l}}$.
			By Lemma~\ref{Norm red lemma}, we get $d^{l^n} \equiv a^2 \pmod {32}$, for some $a \in \Z$. Since the only odd squares modulo $32$ are $\{1,9, 17, 25\}$, we get $d^{l^n} \equiv 1$ or $9$ or $17$ or $25$ $ \pmod {32}$. This gives $d^{l^n} \equiv 1 \pmod 8$. Since $l^n$ is odd, we have $d \equiv d^{l^n} \equiv 1 \pmod 8$. This implies $d \equiv 1$ or $9$ or $17$ or $25$ $ \pmod {32}$, contradicting the hypothesis $(4)$ of Theorem~\ref{main result4}. 
			
			Therefore, all the hypotheses of Proposition~\ref{prop for solution of S-unit eqn even soln} are satisfied for $K=\Q_{n,l}$. Thus, every solution $(\lambda, \mu)$ to the $S_K^\prime$-unit equation
			$\lambda+\mu=1, \ \lambda, \mu \in \mcO_{S_K^\prime}^\times$ satisfies $$\max \left\{|v_\mfP(\lambda)|,|v_\mfP(\mu)| \right\}\leq 4.$$
			Finally, the proof of Theorem~\ref{main result4}  follows by applying Theorem~\ref{thm for S-unit crit all ABC} to the case $K=\Q_{n,l}$ and $A,B,C \in \{\pm 2^rd^s: r,s \in \Z_{\geq 0}\}$.
		\end{proof}



\end{document}